\documentclass[11pt]{article}

\usepackage{amsmath,amssymb,amsthm,mathtools}
\usepackage[margin=1in]{geometry}
\usepackage{enumitem}
\usepackage{graphicx}
\usepackage[colorlinks=true,allcolors=blue]{hyperref}

\numberwithin{equation}{section}

\theoremstyle{plain}
\newtheorem{theorem}{Theorem}[section]
\newtheorem{proposition}{Proposition}[section]
\newtheorem{corollary}{Corollary}[section]

\theoremstyle{definition}
\newtheorem{definition}{Definition}[section]
\newtheorem{example}{Example}[section]
\newtheorem{problem}{Problem}[section]

\theoremstyle{remark}
\newtheorem{remark}{Remark}[section]

\newcommand{\PP}{\mathbb P}
\newcommand{\OO}{\mathcal O}

\DeclareMathOperator{\ord}{ord}

\DeclareMathOperator{\bideg}{bideg}

\begin{document}

\title{Degree growth, orbit graphs, and functoriality for birational dynamical systems}
\author{ Tomoyuki Takenawa\\[3pt] 
\small Faculty of Marine Technology,\\ 
\small Tokyo University of Marine Science and Technology\thanks{ 2-1-6 Etchujima, Koto-ku, Tokyo 135-8533, Japan. }\\[2pt] 
\small \texttt{takenawa@kaiyodai.ac.jp} }
\date{}
\maketitle

\begin{abstract}
The purpose of this paper is to give a natural divisor-theoretic formulation
of the counting method introduced by Halburd for computing degree growth,
in a form applicable to birational dynamical systems on varieties of
arbitrary dimension. Instead of counting only preimages of special values, we follow
time-indexed divisorial conditions through singularity patterns.  These
conditions are recorded on normalized finite-window orbit graphs, where
the relevant multiplicities are realized as divisorial valuations of
pullbacks of time-indexed divisors.  This construction explains how the
elementary computations appearing in singularity patterns can be interpreted
as degree relations on a single normal variety.  
We then show that further relations arise from the failure of
functoriality of pullbacks: when the center of a divisor enters the
relevant indeterminacy locus, a degree-drop divisor appears.  Under
suitable finite-type assumptions, the two kinds of relations lead to
closed linear difference systems governing degree sequences.  Several
examples, including higher-dimensional ones, demonstrate that the two
mechanisms are complementary and that their combination determines the
degree growth in cases where either mechanism alone is insufficient.
\end{abstract}

\medskip
\noindent\textbf{Keywords.}
Birational dynamical systems; degree growth; singularity patterns;
Halburd-type counting; orbit graphs; functoriality of pullbacks.

\section{Introduction}

Degree growth of iterates is a basic invariant in the study of birational dynamical systems. In this paper we consider birational self-maps of smooth projective varieties and study degree sequences associated with their iterates. The exponential growth rate of such a sequence measures the complexity of the dynamics, and the logarithm of this growth rate is the algebraic entropy in the sense of Bellon--Viallet \cite{BellonViallet}. More broadly, the iteration of rational and meromorphic maps is a central theme in higher-dimensional complex dynamics, going back to early works of Forn\ae ss--Sibony and others \cite{FornaessSibony1994}.

This point of view is closely related to algebraic stability. On surfaces, Diller--Favre showed that bimeromorphic surface maps can be made algebraically stable after a suitable modification, so that degree growth is governed by the induced linear action on cohomology \cite{DillerFavre}.

A related geometric picture appears in the Okamoto--Sakai theory of
Painlev\'e equations and their discrete analogues.  In this theory, spaces
of initial conditions are constructed by resolving singularities, and the
resulting families of rational surfaces carry affine root-system
symmetries \cite{Okamoto1979,Sakai}.  Discrete Painlev\'e systems are
typically non-autonomous: their evolution is generally an isomorphism
between spaces of initial conditions at different parameter values, rather
than a self-map of a single surface.  In this setting, the degree growth with 
respect to the discrete time \(n\) is at most quadratic, 
and hence the algebraic entropy is zero.  In the present paper, however, we restrict our
attention to autonomous birational dynamical systems.  Accordingly, when
we refer below to maps related to Painlev\'e theory, we mean autonomous or
suitably specialized cases in which the evolution is viewed as a
birational self-map.

Closely related ideas have also been applied to two-dimensional dynamical
systems beyond the Painlev\'e setting, including the Hietarinta--Viallet
mapping.  In contrast to discrete Painlev\'e systems, the Hietarinta--Viallet
mapping has exponential degree growth and hence positive algebraic
entropy.  After resolving the relevant singularities, the induced linear
maps on, or between, the corresponding Picard groups govern the degree
sequence and hence the algebraic entropy \cite{Takenawa2001b}.

In higher dimensions, however, an analogous geometric understanding is not
yet sufficiently developed.  For higher-dimensional Cremona-type maps,
methods for computing degree growth by regularization have been studied
\cite{BedfordKim2004}, and related linear fractional recurrences have also
been analyzed using orbits of exceptional divisors and actions on Picard
groups after blow-ups \cite{BedfordKim2006,BedfordKim2011}.  In general,
the behavior of indeterminacy loci and exceptional divisors becomes
significantly more complicated in higher dimensions, and it is not easy to
explicitly construct all blow-ups and compute the action on the Picard
group or the N\'eron--Severi group.  In some well-structured
higher-dimensional autonomous examples, including examples related to
discrete Painlev\'e theory, one can construct a suitable birational model
on which the map becomes a pseudo-automorphism, and compute degree growth
from the induced action on divisor classes
\cite{Takenawa2004P3,CarsteaTakenawa2019,Takenawa2021,
StokesTakenawaCarstea2025}.  Such an analysis, however, depends on the
existence of a good birational model and cannot be directly applied to
general higher-dimensional birational dynamical systems.

Another important background comes from singularity confinement.
Singularity confinement, introduced by Grammaticos--Ramani and others, is
a powerful tool in the study of discrete integrable systems
\cite{GrammaticosRamani}.  It examines whether the degrees of freedom
temporarily lost under iteration of a map are recovered after finitely many
steps.  It was later shown by the example of Hietarinta--Viallet that
singularity confinement alone does not sufficiently characterize
integrability, and the viewpoints of degree growth and algebraic entropy
became important \cite{HietarintaViallet,BellonViallet}.  In this context,
Halburd gave an efficient method for computing degree sequences by
counting preimages of values appearing in singularity patterns
\cite{Halburd2017}.  We call this method Halburd's counting method in this paper.
Ramani, Grammaticos, Willox, Mase and collaborators further developed
Halburd-type counting methods for deriving degree growth from singularity
patterns
\cite{RamaniGrammaticosWilloxMase2017,MaseWilloxRamaniGrammaticos2019,
WilloxMaseRamaniGrammaticos2024,RamaniGrammaticosCarsteaWillox2025,
GrammaticosRamaniCarsteaWillox2025}.

A further related method is that of Kanki--Mada--Tokihiro and
collaborators, which directly controls the Laurent property,
irreducibility, and coprimeness of rational functions appearing under
iteration \cite{KankiMadaTokihiro,KankiMadaMaseTokihiro}.  More recently,
Alonso--Suris--Wei gave a method for determining degree growth using
factorizations of pulled-back polynomials and the propagation of indices
associated with blow-ups \cite{AlonsoSurisWeiI,AlonsoSurisWeiII}.

The starting point of this paper is to extend the viewpoint behind
Halburd's counting method: instead of counting only preimages of special
values, we follow time-indexed divisorial conditions through singularity
patterns.  However, these elementary computations are not, by themselves,
global divisor equalities on the original phase spaces.  A condition that
appears as a divisor at one time may have a center of higher codimension
at another time, and the divisorial components observed in a singularity
pattern do not live on a single fixed variety in a functorial way.  To
treat these computations geometrically, one needs an additional device.
In this paper, this device is the normalized finite-window orbit graph.
The prime Weil divisors on this graph record the divisorial components
traced by singularity patterns, while their centers under the time
projections describe how these components appear at each time.

This construction leads to two kinds of degree relations.  The first kind
comes from the decomposition of pullbacks of time-indexed divisors on the
normalized orbit graph.  The multiplicities appearing in this
decomposition are divisorial valuations, and applying degrees to the
decomposition produces relations among the degrees of the iterates.  We
call these Halburd-type divisor decompositions and the resulting degree
relations Halburd-type degree relations.  They include the usual
Halburd-type counting relations, but are not limited to the counting of
preimages of special values.

The second kind comes from a different mechanism.  It concerns degree
drops caused by common factors in iterated rational representations.  We
describe such cancellations by effective degree-drop divisors, which arise
from the failure of functoriality of pullbacks; their prime
decompositions give degree-drop relations.  We call the method combining
these two kinds of relations the divisor-orbit decomposition method.

A technical ingredient used in this paper is a criterion for functoriality
of pullbacks for rational maps on smooth projective varieties.  More
precisely, for smooth projective varieties \(X,Y,Z\), dominant rational
maps \(f:X\dashrightarrow Y\), \(g:Y\dashrightarrow Z\), and a very ample
divisor \(A\) on \(Z\), we give a necessary and sufficient condition for
the equality \(f^*g^*A=(g\circ f)^*A\) to hold.  
The condition is that no prime divisor
on \(X\) is mapped by \(f\) into the indeterminacy locus of \(g\).
Criteria relating functoriality of pullbacks to exceptional or divisorial components entering indeterminacy loci are central in Diller--Favre's algebraic stability for surface maps and in higher-dimensional work of Bedford--Kim, Roeder, and Bayraktar \cite{DillerFavre,BedfordKim2004,BedfordKim2006,BedfordKim2011, Roeder2014,Bayraktar2013}.
The feature of our formulation is that the obstruction to
functoriality is kept as an actual effective divisor, whose prime
components and multiplicities are described explicitly.  For this reason
the equivalence needed for degree drops appears naturally: the absence of
such divisorial components gives functoriality, while the presence of one
gives a nonzero component of the degree-drop divisor and hence forces the
pullback equality to fail.

Although this criterion for very ample divisors gives a necessary and
sufficient condition for functoriality, the degree-drop relations used
later require a more explicit form of the same phenomenon.  Namely, for a
chosen effective Cartier divisor \(D\), we need to describe how the
functoriality relation
\[
f^*g^*D=(g\circ f)^*D
\]
fails, rather than only whether it holds.  We therefore extend the same
common-factor argument from very ample divisors to arbitrary effective
Cartier divisors, and obtain the effective degree-drop divisor
\[
f^*g^*D-(g\circ f)^*D.
\]
Its prime components and multiplicities give the divisor-theoretic data
that later appear in the degree-drop relations.

We next introduce finite-window orbit graph varieties.  For a finite
interval \(I=[a,b]\subset\mathbb Z\), we define
\begin{equation}
\label{eq:intro-orbit-graph}
\Omega_I\subset \prod_{i=a}^b X_i
\end{equation}
as the Zariski closure of orbit segments for which all successive images
are defined.  After normalization, prime Weil divisors on the orbit graph
represent the divisorial data used in the degree relations.  A singularity
pattern is then expressed by recording the centers of such a divisor under
the time projections.  In this way, components that appear at a given time
only as points or lower-dimensional subvarieties are not lost: they are
treated as divisors on the normalized orbit graph.  The remaining time
coordinates retain the parameters along these components and thus play
the role of exceptional parameters in the multiplicity computations.

The advantage of this formulation is that it extracts only the divisorial
data relevant to degree growth, without requiring a full construction of
spaces of initial conditions or direct control of all iterated rational
representations.  It is not intended to recover the full symmetry
structure of an integrable system.  Rather, it provides a finite-dimensional
description when the relevant divisorial components close into a finite
set.  If they do not, one must instead use infinitely many auxiliary
variables or a generating-function-type description.  This is consistent
with general frameworks for infinite-dimensional divisor data, such as
b-divisors and Picard--Manin-type spaces \cite{DangFavre}, and with known
examples of transcendental dynamical degrees \cite{BellDillerJonsson}.

The organization of the paper is as follows.  Section 2 presents two basic
examples, illustrating a degree-drop relation and a Halburd-type divisor
decomposition.  Section 3 studies the failure of functoriality of
pullbacks and describes the associated effective degree-drop divisor.
Section 4 introduces normalized finite-window orbit graphs and
singularity patterns as center sequences of divisorial valuations.
Section 5 formulates the divisor-orbit decomposition method as a system
of linear degree relations.  Sections 6 and 7 apply the method to a
conic-bundle example and a four-dimensional multiplicative map,
respectively.  Section 8 treats a Riccati-type linearizable map and
illustrates the need for infinitely many auxiliary variables when the
relevant divisor sequences do not close into a finite set.

Throughout the paper, all varieties are defined over \(\mathbb C\).

\section{Basic examples}

We introduce the notation for degrees used in this paper only to the extent needed for the examples. The general definition of divisor degrees will be given in later sections.

First consider the case \(X=\PP^N\). Let
\[
f:\PP^N\dashrightarrow\PP^N
\]
be a birational self-map. We write its \(n\)-th iterate, using initial homogeneous coordinates
\[
\mathbf{x}_0=[x_{0,0}:x_{1,0}:\cdots:x_{N,0}],
\]
as
\[
f^n(\mathbf{x}_0)
=
[P_{0,n}(\mathbf{x}_0):\cdots:P_{N,n}(\mathbf{x}_0)].
\]
Here \(P_{0,n},\ldots,P_{N,n}\) are homogeneous polynomials with no common factor. We define
\[
d_n:=\deg f^n:=\deg P_{0,n}.
\]
Equivalently, \(\deg P_{i,n}\) is independent of \(i\).

Next consider the case \(X=(\PP^1)^M\). Let
\[
f:(\PP^1)^M\dashrightarrow(\PP^1)^M
\]
be a birational self-map. We write its \(n\)-th iterate in affine coordinates, using initial affine coordinates
\[
\mathbf{x}_0=(x_{1,0},\ldots,x_{M,0}),
\]
as
\[
f^n(\mathbf{x}_0)
=
(x_{1,n},\ldots,x_{M,n})
=
\left(
\frac{p_{1,n}(\mathbf{x}_0)}{q_{1,n}(\mathbf{x}_0)},
\ldots,
\frac{p_{M,n}(\mathbf{x}_0)}{q_{M,n}(\mathbf{x}_0)}
\right),
\]
where \(p_{i,n},q_{i,n}\) have no common factor. We set
\[
\deg_{x_{j,0}}x_{i,n}
:=
\max\{\deg_{x_{j,0}}p_{i,n},\deg_{x_{j,0}}q_{i,n}\}.
\]
We also define the degree of \(f^n\) by
\[
d_n:=\deg f^n
:=
\max_{1\leq i,j\leq M}\deg_{x_{j,0}}x_{i,n}.
\]

\subsection[A basic example on P3]{A basic example on \(\PP^3\)}
\label{example:working-cremona-degree-drop}
Consider the standard Cremona transformation on \(\PP^3\),
\[
\sigma[x_0:x_1:x_2:x_3]
=
[x_1x_2x_3:x_0x_2x_3:x_0x_1x_3:x_0x_1x_2],
\]
and the linear automorphism \(A\in\operatorname{PGL}_4\)
\[
A=
\begin{pmatrix}
0&1&1&1\\
0&1&1&0\\
1&1&0&1\\
1&0&1&1
\end{pmatrix}.
\]
We study the birational dynamical system on \(\PP^3\) defined by \(f=A\circ\sigma\).

Then
\[
f[x_0:x_1:x_2:x_3]=[y_0:y_1:y_2:y_3],
\]
where
\[
\begin{aligned}
y_0&=x_0(x_1x_2+x_1x_3+x_2x_3),\\
y_1&=x_0x_3(x_1+x_2),\\
y_2&=x_2(x_0x_1+x_0x_3+x_1x_3),\\
y_3&=x_1(x_0x_2+x_0x_3+x_2x_3).
\end{aligned}
\]
Applying \(\sigma\) once more gives
\[
\sigma[y_0:y_1:y_2:y_3]
=
[y_1y_2y_3:y_0y_2y_3:y_0y_1y_3:y_0y_1y_2].
\]
A direct computation shows that these four homogeneous forms have the common factor \(x_0\) with multiplicity \(1\), and hence
\[
\deg(\sigma\circ f)=8<9=\deg\sigma\cdot\deg f.
\]
Since \(A\) is only a linear isomorphism and does not change degrees,
\[\deg f^2=8<9=(\deg f)^2.\]

Let us view this degree drop geometrically. The indeterminacy locus of \(\sigma\) is the union of the six coordinate lines
\[
I(\sigma)=\bigcup_{0\le i<j\le3}\{x_i=x_j=0\}.
\]
On the other hand, the coordinate hyperplane
\[
H_0=\{x_0=0\}
\]
is contracted by \(f=A\circ\sigma\) to
\[
f(H_0)=A[1:0:0:0]=[0:0:1:1].
\]
This point satisfies
\[
[0:0:1:1]\in I(\sigma).
\]
Thus \(H_0\) is mapped by \(f\) into the indeterminacy locus of the next map \(\sigma\), and consequently the homogeneous representation of the composite \(\sigma\circ f\) has the common factor \(x_0\). Moreover, the image of the generic point of \(H_0\) under \(\sigma\circ f\), as determined by the representation after cancellation, is the line \(x_2=x_3=0\) (Figure~\ref{fig:cremona-example}).

\begin{figure}[htbp]
\centering
\includegraphics[width=.4\textwidth]{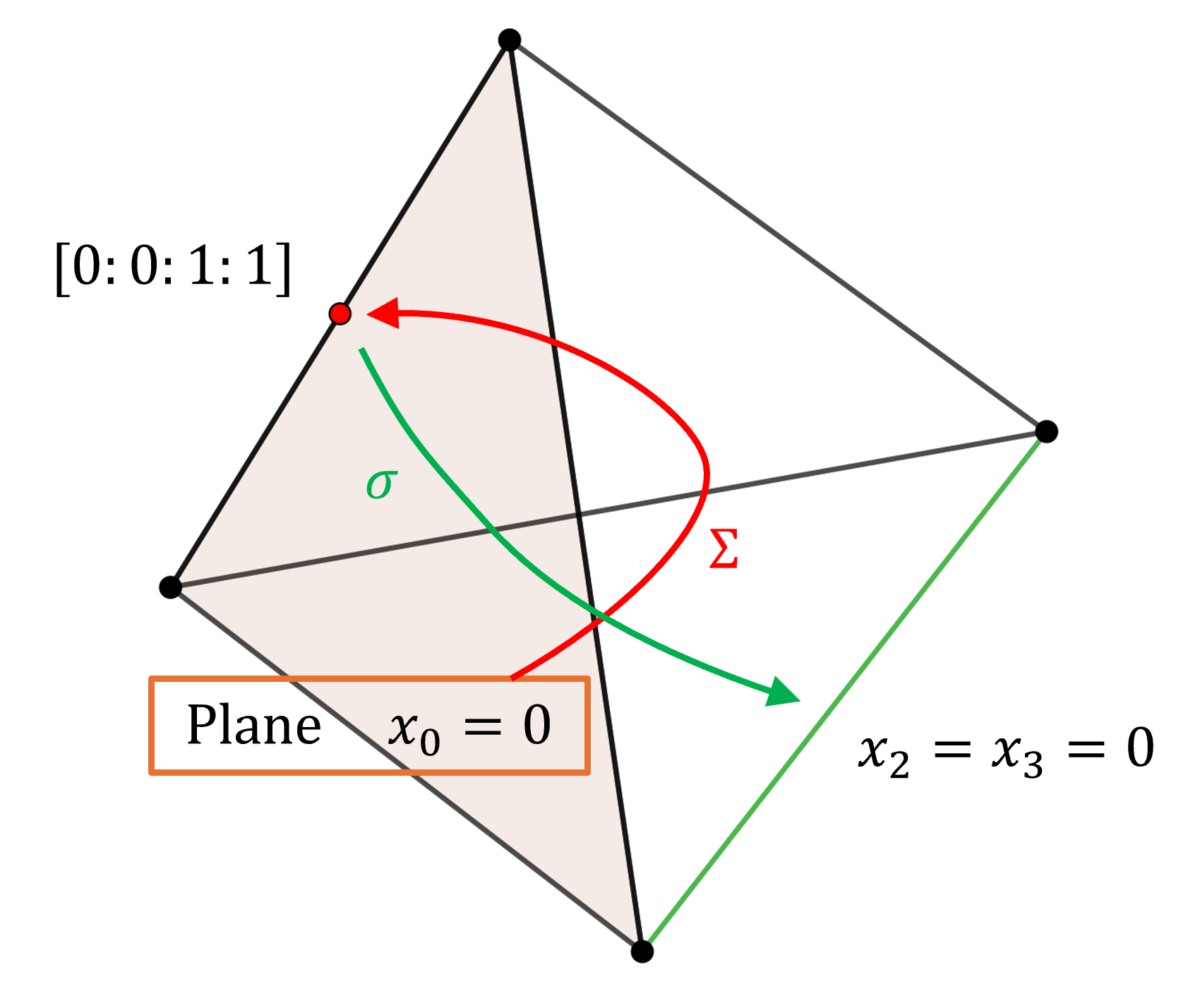}
\caption{A schematic picture of the behavior of the hyperplane \(H_0=(x_0=0)\) under \(f\) and \(\sigma\). The hyperplane \(H_0\) is contracted by \(f\) to a point, and this point belongs to the indeterminacy locus of \(\sigma\).}
\label{fig:cremona-example}
\end{figure}

The orbit of \(H_0\) is
\[
x_0=0 \to [0:0:1:1] \to
\text{the line } \{x_1=x_0,\ x_2=x_0-x_3\}
\to \text{a conic} \to \text{a quartic} \to \cdots .
\]
That this orbit is never subsequently contained in the plane \(x_0=0\) can be checked, for example, by computing the orbit of the point \([1:1:0:1]\), which lies on the line above. The other divisors contracted by \(f\) are \(x_i=0\), \(i=1,2,3\). The orbits of their generic points are
\begin{align*}
&x_1=0 \to [1:1:1:0] \to [1:0:1:1] \to [1:1:1:0] \to \cdots,\\
&x_2=0 \to [1:1:0:1] \to [1:1:0:1] \to [1:1:0:1] \to \cdots,\\
&x_3=0 \to [1:0:1:1] \to [1:1:1:0] \to [1:1:1:0] \to \cdots,
\end{align*}
and these orbits are also not contained in \(I(\sigma)\).

Using this singularity pattern, the divisor \((x_{0,n}=0)\) on \(X_n=\PP^3\) decomposes as
\begin{equation}
\label{eq:p3-basic-halburd-decomposition}
\begin{aligned}
(x_{0,n}=0)&=(x_{0,n}=0)^\circ + ([0:0:1:1])_n\\
&= (x_{0,n}=0)^\circ +(x_{0,n-1}=0)^\circ.
\end{aligned}
\end{equation}
Here \((x_{0,n}=0)^\circ\) corresponds to generic points of the divisor \((x_{0,n}=0)\); its precise definition will be given in later sections. If we denote the degree of \((x_{0,n}=0)^\circ\) by \(t_n\), then the divisor decomposition above gives
\begin{equation}
\label{eq:cremona-degree-decomposition}
d_n=t_n+t_{n-1}.
\end{equation}

In computing the degree \(d_{n+1}\) of \(f^{n+1}\), since \(f\) is given by cubic forms and the only factor that disappears by cancellation is \((x_{0,n-1}=0)^\circ\), we get
\begin{equation}
\label{eq:cremona-degree-drop}
d_{n+1}=3d_n-t_{n-1}.
\end{equation}

Thus we obtain the recurrence
\begin{equation}
\label{eq:cremona-closed-recurrence}
d_{n+2}=2d_{n+1}+2d_n.
\end{equation}
Since the degree is measured with respect to \(\mathbf{x}_0\), the initial values are \(d_0=1\), \(d_1=3\), \(t_0=1\), and \(t_1=2\), and hence
\[d_2=8,\quad d_3=22,\quad d_4=60,\dots,\]
in agreement with direct computation.

The value \(t_1\) can be computed as the degree of the second factor in
\[
x_{0,1}=x_{0,0}(x_{1,0} x_{2,0}+x_{1,0} x_{3,0}+x_{2,0} x_{3,0}).
\]

\subsection[A basic example on P1 x P1]{A basic example on \(\PP^1\times \PP^1\)}
\label{ex:dp2}

Consider the birational dynamical system on \(X=\PP^1\times\PP^1\) given by
\begin{equation}
\label{eq:p1p1-map}
(x_{n+1},y_{n+1})
=
\varphi(x_n,y_n)
=
\left(1-\frac1{x_n}-y_n,\ x_n\right).
\end{equation}
This map is integrable in the sense that, after resolving the relevant
base points, it preserves an elliptic fibration; the associated rational
elliptic surface has a reducible singular fiber of type \(D_5^{(1)}\).
It is also studied in \cite{RamaniGrammaticosWilloxMase2017}.  The map is
equivalent to the second-order recurrence
\(x_{n+1}=1-1/x_n-x_{n-1}\), but in this paper we always treat it as an
orbit of \((x_n,y_n)\in\PP^1\times\PP^1\).

Let the initial value be \((x_0,y_0)\). In this example one checks directly that the degree sequence is given by
\[
d_n:=\deg_{x_0}x_n.
\]
The first few values are
\[
d_0=1,\qquad d_1=1,\qquad d_2=2,\qquad
d_3=3,\qquad d_4=5,\qquad d_5=6.
\]

We first list the singularity patterns of this map. They are obtained by tracking generic points of divisors on which the Jacobian vanishes. The main singularity pattern involved in the degree calculation starts from the generic point of \(x=0\). Indeed, putting
\[
x=\varepsilon,
\qquad y=\alpha,
\]
and following the principal parts of the Laurent expansions, we obtain
\[
(x=0)^\circ
\to
(\infty,0)
\to
(1,\alpha)
\to
(\infty,1)
\to
(0,\infty)
\to
(y=0)^\circ.
\]
Thus, including the generic components before and after it, we obtain the confined singularity pattern
\[
\text{curve}^{\circ}
\to
(x=0)^{\circ}
\to
(\infty,0)
\to
(1,\alpha)
\to
(\infty,1)
\to
(0,\infty)
\to
(y=0)^{\circ}
\to
\text{curve}^{\circ}.
\]
Here \({}^{\circ}\) denotes the component obtained by taking the generic point of the subvariety specified by the indicated condition and tracking its iterates. The precise definition will be given in later sections.

\begin{figure}[htbp]
\centering
\includegraphics[width=.4\textwidth]{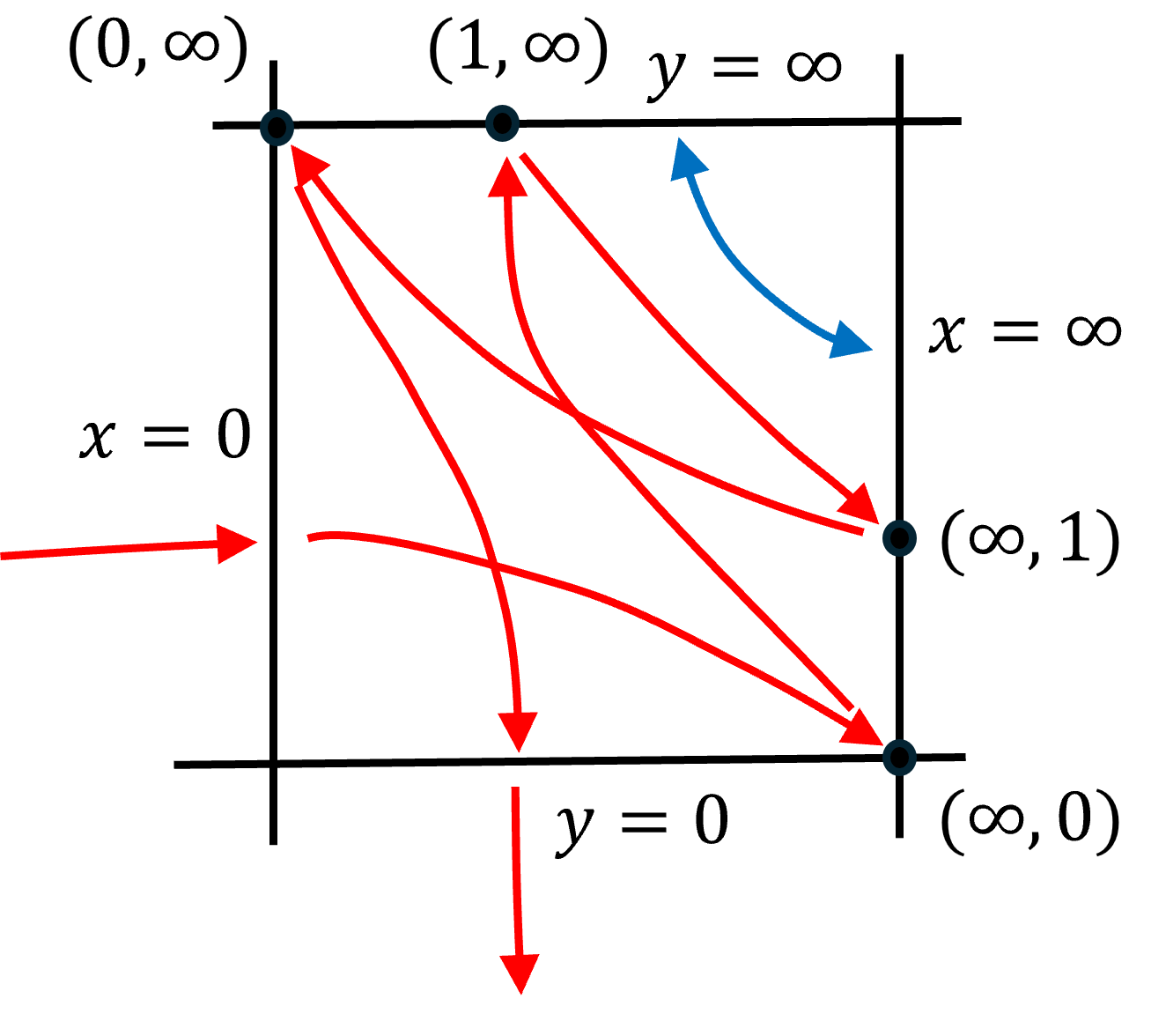}
\caption{A singularity pattern of the dynamical system on \(\PP^1\times\PP^1\). A contracted divisor reappears as a divisor after five time steps.}
\label{fig:p1p1-example}
\end{figure}

From this singularity pattern, the divisor \((x_n=0)\) at time \(n\) decomposes as follows. Since \((0,\infty)_n\) corresponds, by the singularity pattern, to \((x_{n-4}=0)^\circ\) four time steps earlier, we can write
\begin{equation}
\label{eq:p1p1-zero-decomposition}
(x_n=0)
=
(x_n=0)^\circ+(0,\infty)_n
=
(x_n=0)^\circ+(x_{n-4}=0)^\circ.
\end{equation}
If \(t_n\) denotes the degree, with respect to \(x_0\), of the divisor corresponding to \((x_n=0)^\circ\), then this divisor decomposition gives
\begin{equation}
\label{eq:p1p1-zero-degree}
d_n=t_n+t_{n-4}.
\end{equation}

Next consider the divisor \((x_n=\infty)\) at time \(n\). By the same singularity pattern, \((\infty,0)_n=(x_{n-1}=0)^\circ\) and \((\infty,1)_n=(x_{n-3}=0)^\circ\). Therefore
\begin{equation}
\label{eq:p1p1-infty-decomposition}
(x_n=\infty)
=
(x_n=\infty)^\circ
+
(x_{n-1}=0)^\circ
+
(x_{n-3}=0)^\circ.
\end{equation}

Moreover, \((x_n=\infty)^\circ\), after changing coordinates, belongs to the cyclic pattern of period \(2\)
\[
(1/x=0)^\circ
\to
(1/y=0)^\circ
\to
(1/x=0)^\circ.
\]
If \(f_n\) denotes the degree of this component with respect to \(x_0\), then \(f_0=1, f_1=0, f_2=1, f_3=0,\ldots\), and \(f_n=(1+(-1)^n)/2\). Thus \eqref{eq:p1p1-infty-decomposition} gives
\begin{equation}
\label{eq:p1p1-infty-degree}
d_n=t_{n-1}+t_{n-3}+f_n.
\end{equation}

Combining \eqref{eq:p1p1-zero-degree} and \eqref{eq:p1p1-infty-degree}, we obtain
\begin{equation}
\label{eq:p1p1-T-recurrence}
t_n+t_{n-4}=t_{n-1}+t_{n-3}+f_n.
\end{equation}
The initial values are
\[
t_0=1,
\qquad t_1=1,
\qquad t_2=2,
\qquad t_3=3.
\]
Solving this linear recurrence gives
\[
t_n=
\frac{1}{72}
\left(
6n^2+36n+47+9(-1)^n
+16\cos\frac{2\pi n}{3}
\right).
\]
Therefore
\[
d_n
=
\frac{1}{36}
\left(
6n^2+12n+23+9(-1)^n
-8\cos\frac{2\pi(n+1)}{3}
\right).
\]

Instead of using \((x_n=0)\) and \((x_n=\infty)\), one may also use \((x_n=1)\) to obtain the same degree sequence. In this case the singularity pattern gives
\begin{equation}
\label{eq:p1p1-basic-halburd-decomposition}
(x_n=1)
=
(x_n=1)^\circ+2(1,\infty)_n
=
(x_n=1)^\circ+2(x_{n-2}=0)^\circ.
\end{equation}
Here the coefficient \(2\) comes from the multiplicity of the corresponding factor in the numerator of \(x_n-1\).

The component \((x_n=1)^\circ\) belongs to the cyclic pattern of period \(3\)
\[
(x-1=0)^\circ
\to
(y-1=0)^\circ
\to
(xy+1=0)^\circ
\to
(x-1=0)^\circ.
\]
If \(g_n\) denotes the degree of this component with respect to \(x_0\), then \(g_0=1, g_1=1, g_2=0, g_3=1,\ldots\), and
\[
g_n=
\frac{2}{3}
\left(
1-\cos\frac{2\pi(n+1)}{3}
\right).
\]
Thus we obtain
\begin{equation}
\label{eq:p1p1-one-degree}
d_n=2t_{n-2}+g_n.
\end{equation}
This is consistent with the expressions for \(d_n\) and \(t_n\) obtained above.

\section{Pullbacks, degree-drop divisors, and divisor degrees}
\label{sec:functoriality-divisor-degree}

In this section we collect the divisorial foundations used in this paper. As seen in the examples in the preceding section, cancellations appearing in iterated representations arise when a divisor enters the indeterminacy locus of the next rational map. We formulate this phenomenon as a failure of functoriality of pullbacks in a one-step composition, and call the effective divisor measuring this failure the degree-drop divisor.

\subsection{Functoriality of pullbacks}

We first fix the notation for pullbacks.

\begin{definition}[Pullback]
\label{def:pullback}
Let \(V,W\) be smooth projective varieties and let
\[
\psi:V\dashrightarrow W
\]
be a dominant rational map. Let \(D\) be a Cartier divisor on \(W\). Let
\[
\Gamma_\psi\subset V\times W
\]
be the closure of the graph over the open set on which \(\psi\) is defined, and choose a resolution of singularities
\[
\rho:\Gamma\to\Gamma_\psi.
\]
We denote the compositions with the two projections \(\Gamma_\psi\to V\) and \(\Gamma_\psi\to W\) by
\[
p:\Gamma\to V,
\qquad
q:\Gamma\to W.
\]
\[
\begin{array}{ccc}
& \Gamma & \\
p\swarrow && \searrow q\\
V && W .
\end{array}
\]
We define
\[
\psi^*D:=p_*q^*D
\]
and call it the pullback of \(D\) by \(\psi\).
\end{definition}

For an irreducible closed subset \(C\subset V\), if \(C\not\subset I(\psi)\), we set
\[
\psi_\circ(C):=\overline{\psi(C\setminus I(\psi))}.
\]
This is the closure of the image of the generic point of \(C\) under \(\psi\). Since the indeterminacy locus of a rational map has codimension at least \(2\), \(\psi_\circ(C)\) is always defined for a prime divisor \(C\subset V\).

Since \(V\) is smooth, this pullback is a Cartier divisor on \(V\).

For rational maps, in general the equality
\[
f^*g^*D=(g\circ f)^*D
\]
does not hold for compositions. This failure is observed as the cancellation of common factors in homogeneous representations. Such a failure of functoriality is closely related to algebraic stability criteria and to the behavior of pullback actions under iteration. The purpose of this section is to characterize this common factor as a divisor being mapped into an indeterminacy locus. In this paper, the effective divisor measuring this common factor is called the degree-drop divisor.

For example, in the basic example on \(\PP^3\) in Section~\ref{example:working-cremona-degree-drop}, if we take \(C=H_0\), \(f=A\circ \sigma\), and \(g=\sigma\), then \(f_\circ(C)\subset I(g)\), and the defining equation \(x_0\) of \(C\) appears as a common factor of multiplicity \(1\) in the homogeneous representation of the composite \(g\circ f\).

\begin{theorem}[Functoriality criterion for pullbacks]
\label{thm:functoriality-very-ample}
Let \(X,Y,Z\) be smooth projective varieties, and let
\(f:X\dashrightarrow Y\), \(g:Y\dashrightarrow Z\) be dominant rational maps. Let \(A\) be a very ample Cartier divisor on \(Z\). Set
\begin{equation}
\label{eq:functoriality-with-common-factor}
K_{g\circ f}(A):=
f^*g^*A
-
(g\circ f)^*A.
\end{equation}

Let
\[
\Phi_A:Z\hookrightarrow \PP^M
\]
be the closed embedding associated with \(A\), and write
\[
\Phi_A\circ g=[Q_0:\cdots:Q_M]
\]
as a homogeneous representation with no common prime factor. For each prime divisor \(C\) on \(X\), set
\[
\mu_C(A)
:=
\min_{0\le i\le M}\ord_C(Q_i\circ f).
\]
Then the coefficient of \(K_{g\circ f}(A)\) along \(C\) is \(\mu_C(A)\). Moreover,
\[
\mu_C(A)>0
\quad\Longleftrightarrow\quad
f_\circ(C)\subset I(g).
\]

Consequently, if functoriality fails, namely if
\[
f^*g^*A\ne (g\circ f)^*A,
\]
then \(K_{g\circ f}(A)\) is a nonzero effective divisor. In this case we call \(K_{g\circ f}(A)\) the degree-drop divisor associated with \(A\) in the composition \(g\circ f\). It can be written as
\[
K_{g\circ f}(A)=\sum_{f_\circ(C)\subset I(g)} \mu_C(A)C.
\]
The sum ranges over all prime divisors \(C\) on \(X\) satisfying \(f_\circ(C)\subset I(g)\), and each \(\mu_C(A)\) is a positive integer.

In particular, the following two conditions are equivalent.
\begin{enumerate}[label=\textup{(\roman*)}]
\item
\[
f^*g^*A=(g\circ f)^*A.
\]
\item There is no prime divisor \(C\subset X\) satisfying \(f_\circ(C)\subset I(g)\).
\end{enumerate}
\end{theorem}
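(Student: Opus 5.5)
Everything is local at a prime divisor \(C\subset X\), so the plan is to compare the two sides in terms of the forms \(Q_i\). Since \(A\) is very ample, \(A=\Phi_A^*H\) for a hyperplane \(H\subset\PP^M\), and a hyperplane section \(\{\sum a_iT_i=0\}\) pulls back to \(Z\) as an element of \(|A|\). Because pullback depends only on the linear equivalence class, I may take \(A=\Phi_A^*H_a\) with \(H_a=\{\sum a_i T_i=0\}\) for a general \(a\in\mathbb C^{M+1}\). Write \(Q_a=\sum a_iQ_i\).

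\textbf{Step 1: identify \(g^*A\).} Since \(Y\) is smooth, hence locally factorial, choose on an affine open \(U\subset Y\) a representation in which the \(Q_i\) are regular functions with no common prime factor. Then \(g^*A|_U=\operatorname{div}(Q_a)\) for general \(a\). To see this, note that \(\Gamma\to Y\) is an isomorphism over \(Y\setminus I(g)\), and \(I(g)\) has codimension \(\ge2\). Also \(p_*\) of the \(p\)-exceptional part vanishes, so \(p_*q^*\) agrees with the naive pullback away from codimension two. Since a divisor on a normal variety is determined in codimension one, the claim follows.

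\textbf{Step 2: the two sides along \(C\).} By Step 1, the coefficient of \(f^*g^*A\) along \(C\) is \(\ord_C(Q_a\circ f)\) computed in a local frame. This holds because \(f\) is defined at the generic point of \(C\), and \(f^*\) of a Cartier divisor is computed by pulling back its local equation there; the same codimension-two argument as in Step 1 justifies this. Similarly, \((g\circ f)\) is given on \(X\) by \([Q_0\circ f:\cdots:Q_M\circ f]\) with \(\mu_C:=\min_i\ord_C(Q_i\circ f)\). After dividing by a local equation of \(C\) to the power \(\mu_C\), the representation has no common factor along \(C\). By Step 1 applied to \(g\circ f\), the coefficient of \((g\circ f)^*A\) along \(C\) is \(\ord_C(Q_a\circ f)-\mu_C\). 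Hence the coefficient of \(K_{g\circ f}(A)\) is \(\mu_C(A)\ge0\), which gives effectivity. Genericity of \(a\) ensures \(\ord_C(Q_a\circ f)=\mu_C\) where needed, but the difference is in any case independent of \(a\). One point to check is the independence of choices of local frame and common-factor normalization: changing frames multiplies all the \(Q_i\) by a common unit, which does not change \(\mu_C\).

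\textbf{Step 3: the equivalence.} We have \(\mu_C>0\) iff every \(Q_i\) vanishes on \(f_\circ(C)\), since \(\ord_C(Q_i\circ f)>0\) iff \(Q_i\) vanishes at the image of the generic point of \(C\). On a locally factorial \(Y\) with coprime \(Q_i\), the common zero locus \(V(Q_0,\dots,Q_M)\) equals \(I(g)\). Indeed, if the \(Q_i\) do not all vanish at \(y\), then \(g\) is defined at \(y\). Conversely, if \(g\) is defined at \(y\), then the coprime local representation near \(y\) differs from \((Q_i)\) by a unit, since any two coprime representations differ by a unit in the UFD \(\mathcal O_{Y,y}\); hence not all \(Q_i\) vanish at \(y\). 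This identification \(V(Q)=I(g)\) is the main obstacle. The subtle part is that global \(Q_i\) are only local sections of a line bundle, so I would work with the local ring \(\mathcal O_{Y,y}\) (a UFD because \(Y\) is smooth) throughout. Granting it, the formula \(K_{g\circ f}(A)=\sum_{f_\circ(C)\subset I(g)}\mu_C(A)\,C\) follows, and the equivalence of (i) and (ii) is immediate.
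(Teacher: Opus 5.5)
Your proof is correct and follows essentially the paper's route: compare the uncancelled representation $[Q_0\circ f:\cdots:Q_M\circ f]$ with the one obtained after removing the common factor of order $\mu_C(A)$ along $C$, then identify $\mu_C(A)>0$ with $f_\circ(C)\subset V(Q_0,\dots,Q_M)=I(g)$. You also justify several points the paper only asserts: the codimension-two identification $g^*A=\operatorname{div}(Q_a)$, the UFD argument for $V(Q)=I(g)$, and the reduction to a hyperplane section. That reduction holds because under $A\mapsto A+\operatorname{div}(\phi)$ both $f^*g^*A$ and $(g\circ f)^*A$ shift by the same principal divisor $\operatorname{div}(\phi\circ g\circ f)$, not because pullbacks are class-invariant as divisors, so you should state it that way.
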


\begin{proof}
Since \(A\) is very ample, choose sections
\[
U_0,\ldots,U_M\in H^0(Z,\OO_Z(A))
\]
which give the closed embedding
\[
\Phi_A:Z\hookrightarrow \PP^M,\qquad
\Phi_A=[U_0:\cdots:U_M].
\]
Then
\[
\Phi_A^*\OO_{\PP^M}(1)\simeq \OO_Z(A),
\]
and, since \(\Phi_A\) is a closed embedding,
\[
I(\Phi_A\circ g)=I(g).
\]

By the pullback construction, we may write
\[
\Phi_A\circ g=[Q_0:\cdots:Q_M],
\]
where
\[
Q_0,\ldots,Q_M\in H^0(Y,\OO_Y(g^*A))
\]
have no common prime divisor on \(Y\).

Let \(C\subset X\) be a prime divisor.  Since the generic point of \(C\) is not
contained in \(I(f)\), the map \(f\) is regular at the generic point of \(C\).
Choose a local frame \(e\) of \(\OO_Y(g^*A)\) near its image and write
\[
Q_i=q_i e .
\]
Then
\[
\ord_C(Q_i\circ f):=\ord_C(q_i\circ f)
\]
is well-defined.

The pullback by the composite \(\Phi_A\circ g\circ f\) is obtained from
\[
[Q_0\circ f:\cdots:Q_M\circ f]
\]
by removing its common factors in codimension \(1\).  Therefore the coefficient
of \(K_{g\circ f}(A)\) along \(C\) is
\[
\mu_C(A)=\min_{0\leq i\leq M}\ord_C(Q_i\circ f).
\]

It remains to characterize the prime divisors \(C\) for which \(\mu_C(A)>0\). Since the generic point of a prime divisor \(C\subset X\) is not contained in \(I(f)\), \(f_\circ(C)\) is defined. By the formula above, \(\mu_C(A)>0\) is equivalent to
\[
\ord_C(Q_i\circ f)>0
\]
for all \(i\). This means that, along the image of the generic point of \(C\),
\[
Q_0=\cdots=Q_M=0,
\]
or equivalently
\[
f_\circ(C)\subset I(\Phi_A\circ g).
\]
Since \(I(\Phi_A\circ g)=I(g)\), we obtain
\[
\mu_C(A)>0
\quad\Longleftrightarrow\quad
f_\circ(C)\subset I(g).
\]

Thus, if functoriality fails, then
\[
K_{g\circ f}(A)=\sum_{f_\circ(C)\subset I(g)}\mu_C(A)C,
\]
and all coefficients are positive. Hence \(K_{g\circ f}(A)\) is a nonzero effective divisor. Conversely, if there is no prime divisor satisfying \(f_\circ(C)\subset I(g)\), then \(\mu_C(A)=0\) for all \(C\), and hence \(K_{g\circ f}(A)=0\). This proves the equivalence of \textup{(i)} and \textup{(ii)}.
\end{proof}

\begin{remark}
\label{rem:common-factor-meaning}
This theorem gives the geometric meaning of a common factor appearing in a homogeneous representation. Namely, a common factor removed by cancellation corresponds to a prime divisor that is mapped into the indeterminacy locus of the next map. The divisorial valuations on orbit graphs considered later provide the language for tracking these prime divisors with time indices.
\end{remark}

The following functoriality criterion for arbitrary Cartier divisors follows immediately from the theorem.

\begin{corollary}[Functoriality for arbitrary Cartier divisors]
\label{cor:all-cartier-smooth}
Under the notation of Theorem~\ref{thm:functoriality-very-ample}, the following conditions are equivalent.
\begin{enumerate}[label=\textup{(\roman*)}]
\item There is no prime divisor \(C\subset X\) satisfying \(f_\circ(C)\subset I(g)\).
\item For every Cartier divisor \(D\) on \(Z\), one has \(f^*g^*D=(g\circ f)^*D\).
\item For some very ample Cartier divisor \(A\) on \(Z\), one has \(f^*g^*A=(g\circ f)^*A\).
\end{enumerate}
\end{corollary}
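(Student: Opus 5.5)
The plan is to prove the cycle of implications (ii)\(\Rightarrow\)(iii)\(\Rightarrow\)(i)\(\Rightarrow\)(ii).

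\emph{(ii)\(\Rightarrow\)(iii).} This is immediate. \(Z\) is projective, so it carries a very ample Cartier divisor, and (ii) applied to it gives (iii).

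\emph{(iii)\(\Rightarrow\)(i).} This is the equivalence (i)\(\Leftrightarrow\)(ii) of Theorem~\ref{thm:functoriality-very-ample}, read for the given very ample divisor \(A\). If \(f^*g^*A=(g\circ f)^*A\), then \(K_{g\circ f}(A)=0\). Every coefficient \(\mu_C(A)\) therefore vanishes. By the theorem, no prime divisor \(C\) satisfies \(f_\circ(C)\subset I(g)\).

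\emph{(i)\(\Rightarrow\)(ii).} This is the direction that needs an extra argument, because the theorem only treats very ample divisors. I would reduce to the very ample case by linearity. Both \(D\mapsto f^*g^*D\) and \(D\mapsto (g\circ f)^*D\) are additive on Cartier divisors. This holds because each is defined as \(p_*q^*\) on a resolved graph, and both \(q^*\) and \(p_*\) are additive. The composite \(f^*g^*\) is then additive as well.

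Next, on a projective variety every Cartier divisor \(D\) can be written as \(D=A_1-A_2\) with \(A_1,A_2\) very ample. To see this, fix a very ample \(H\). For \(m\gg0\) both \(D+mH\) and \(mH\) are very ample, so one may take \(A_1=D+mH\) and \(A_2=mH\).

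Hypothesis (i) does not depend on the divisor at all. So Theorem~\ref{thm:functoriality-very-ample} gives \(f^*g^*A_j=(g\circ f)^*A_j\) for \(j=1,2\). Subtracting these two equalities yields \(f^*g^*D=(g\circ f)^*D\).

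The main obstacle I expect is making sure the pullback of Definition~\ref{def:pullback} is genuinely additive and well defined on divisors, rather than only up to linear equivalence. It must also be independent of the chosen resolution \(\rho\). I would handle this at the level of Cartier data. On \(\Gamma\), the pullback \(q^*D\) is additive as an honest divisor. The pushforward \(p_*\) of Weil divisors is additive, and \(V\) is smooth, so its image is again Cartier. Independence of the resolution follows because any two resolutions are dominated by a third, and \(p_*\) commutes with birational pushforward.

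If one prefers to avoid the difference trick, there is an alternative route: write \(D\) locally as a difference of effective divisors and rerun the common-factor argument of the theorem's proof with local equations. The linearity argument above is shorter.
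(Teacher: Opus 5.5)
Your proposal is correct and follows essentially the same route as the paper: (ii)\(\Rightarrow\)(iii) is trivial, (iii)\(\Rightarrow\)(i) comes from the equivalence in Theorem~\ref{thm:functoriality-very-ample}, and (i)\(\Rightarrow\)(ii) writes \(D\) as the difference of the very ample divisors \(mH+D\) and \(mH\) and subtracts using linearity of pullback. Your extra remarks on the additivity and resolution-independence of \(p_*q^*\) make explicit what the paper leaves implicit.
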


\begin{proof}
The equivalence \((i)\Leftrightarrow(iii)\) is Theorem~\ref{thm:functoriality-very-ample}. The implication \((ii)\Rightarrow(iii)\) is clear. It remains to prove \((i)\Rightarrow(ii)\).

Take any Cartier divisor \(D\). Fix a very ample Cartier divisor \(H\) on \(Z\). For \(m\gg 0\), both \(mH\) and \(mH+D\) are very ample. By assumption \((i)\) and Theorem~\ref{thm:functoriality-very-ample}, we have \(f^*g^*(mH)=(g\circ f)^*(mH)\) and \(f^*g^*(mH+D)=(g\circ f)^*(mH+D)\). Since pullback is linear in Cartier divisors, subtracting the two equalities gives the assertion.
\end{proof}

\subsection{Degree vectors of divisors}
\label{subsec:divisor-degree}

The functoriality criterion above interprets cancellation in homogeneous representations as a degree-drop divisor. In this subsection we define degrees more generally by using coefficients with respect to a basis of numerical divisor classes.

\begin{definition}[Degree vector of a divisor]
\label{def:divisor-degree}
Let \(X\) be a smooth projective variety.  
Let \(N^1(X)_{\mathbb Z}\) denote the lattice of numerical equivalence
classes of Cartier divisors on \(X\). Fix a \(\mathbb Z\)-basis
\[
\mathcal B=(B_1,\ldots,B_\rho)
\]
of \(N^1(X)_{\mathbb Z}\).

For a Cartier divisor \(D\) on \(X\), denote by \([D]_{\rm num}\) its
class in \(N^1(X)_{\mathbb Z}\).  Write
\[
[D]_{\rm num}
=
d_1B_1+\cdots+d_\rho B_\rho
\]
with \(d_1,\ldots,d_\rho\in\mathbb Z\).  We define
\[
\deg_{\mathcal B}(D):=(d_1,\ldots,d_\rho)\in\mathbb Z^\rho.
\]
We call \(\deg_{\mathcal B}(D)\) the degree vector of \(D\) with respect
to \(\mathcal B\).
\end{definition}

\subsection{Divisor decomposition of the functoriality defect for Cartier divisors}

Next, for an effective Cartier divisor \(D\), we describe the defect of functoriality of pullbacks as a divisor. This divisor will be used later when deriving divisor decompositions associated with degree drops. Thus, for dominant rational maps \(f:X\dashrightarrow Y\), \(g:Y\dashrightarrow Z\), and an effective Cartier divisor \(D\) on \(Z\), we describe
\[
f^*g^*D
-
(g\circ f)^*D
\]
in terms of orders of vanishing along prime divisors on \(X\), and identify which prime divisors occur in its support.

In this paper, for an irreducible subvariety \(B\) on \(Y\), we use the notation
\[
B\subset I_D(g)
\]
in the following sense. Let \(\eta_B\) be the generic point of \(B\). Take a local equation \(s=0\) of \(D\), and write its pullback by \(g\) as a rational function near \(\eta_B\) in lowest terms:
\[
s\circ g=\frac{a}{b}.
\]
Here \(a,b\) are chosen to have no common factor near \(\eta_B\). We say that
\[
B\subset I_D(g)
\]
if \((a=0)\) and \((b=0)\) both hold at the generic point of \(B\). In other words, when the local equation of \(D\) is pulled back along \(g\), both numerator and denominator vanish at the generic point of \(B\).

This notation does not depend on the choice of the local equation of \(D\). Indeed, replacing the local equation by a unit multiple changes the reduced fractional expression near \(\eta_B\) only by a unit multiple. Components of \(D\) that do not appear near \(\eta_B\) are units in the local equation and hence do not affect this condition.

\begin{example}[A divisor-dependent indeterminacy locus]
\label{ex:dependence-ID}
In the basic example on \(\PP^1\times\PP^1\) in
Section~\ref{ex:dp2}, consider the map \(\varphi\) in
\eqref{eq:p1p1-map}.  We write \((x,y)\) for the source affine
coordinates and \((\bar{x},\bar{y})\) for the target affine coordinates.
Let
\[
H_{\bar{x}}=(\bar{x}=0),
\qquad
H_{\bar{y}}=(\bar{y}=0)
\]
be the two coordinate divisors on the target.

First consider \(H_{\bar{x}}\).  The pullback of its local equation is
\begin{equation}
\label{eq:dependence-ID-xbar-x}
\bar{x}\circ\varphi
=
1-\frac{1}{x}-y
=
\frac{x-1-xy}{x}.
\end{equation}
Away from the possible pole loci \(x=0\) and \(y=\infty\), this is
regular, and no simultaneous vanishing of numerator and denominator can
occur.  Along the generic point of the divisor \(x=0\), with \(y\)
finite, the numerator in \eqref{eq:dependence-ID-xbar-x} does not vanish.

Along the generic point of the divisor \(y=\infty\), with \(x\neq 0\),
putting \(v=1/y\), one has
\begin{equation}
\label{eq:dependence-ID-xbar-v}
\bar{x}\circ\varphi
=
\frac{xv-v-x}{xv}.
\end{equation}
The numerator in \eqref{eq:dependence-ID-xbar-v} does not vanish at the
generic point of the divisor \(y=\infty\).

It remains to consider the center \(B=\{(0,\infty)\}\).  Near its generic
point \(\eta_B\), the pair \(x, v=1/y\) is a system of local coordinates.
By \eqref{eq:dependence-ID-xbar-v}, both the numerator and the
denominator vanish at \(\eta_B\).  Hence
\[
I_{H_{\bar{x}}}(\varphi)=\{(0,\infty)\}.
\]

Next consider \(H_{\bar{y}}\).  Since
\[
\bar{y}\circ\varphi=x,
\]
the corresponding numerator and denominator do not vanish simultaneously
at the generic point of any irreducible subvariety of the source.  Hence
\[
I_{H_{\bar{y}}}(\varphi)=\emptyset.
\]
Together with the computation for \(H_{\bar{x}}\), and since
the second component \(\bar{y}=x\) has no indeterminacy as a rational map
to \(\PP^1\), we obtain
\[
I(\varphi)=I_{H_{\bar{x}}}(\varphi)=\{(0,\infty)\},
\qquad
I_{H_{\bar{y}}}(\varphi)=\emptyset.
\]
This shows that the locus \(I_D(g)\) depends on the chosen divisor \(D\),
not only on the rational map \(g\).
\end{example}

\begin{theorem}[Decomposition of the functoriality defect for Cartier divisors]
\label{thm:cartier-functoriality}
Let \(X,Y,Z\) be smooth projective varieties, and let \(f:X\dashrightarrow Y\), \(g:Y\dashrightarrow Z\) be dominant rational maps. Let \(D\) be an effective Cartier divisor on \(Z\).

Let \(C\) be a prime divisor on \(X\), and let \(\eta_C\) be its generic point. Set \(B=f_\circ(C)\). Near the generic point of \(B\), take a local equation \(s=0\) of \(D\) and write
\[
s\circ g=\frac{a}{b}
\]
in lowest terms. Define
\begin{equation}
\label{eq:cartier-muC}
\mu_C(D):=
\min\{\ord_C(a\circ f),\ord_C(b\circ f)\}.
\end{equation}
This value is independent of the choice of local equation and of the reduced fractional representation.

Then the coefficient of
\begin{equation}
\label{eq:cartier-defect-divisor}
K_{g\circ f}(D):=
f^*g^*D
-
(g\circ f)^*D
\end{equation}
along \(C\) is \(\mu_C(D)\). Moreover,
\[
\mu_C(D)>0
\quad\Longleftrightarrow\quad
f_\circ(C)\subset I_D(g).
\]

Consequently, if the functoriality equality for this divisor \(D\) fails,
then \(K_{g\circ f}(D)\) is a nonzero effective divisor and can be written as
\begin{equation}
\label{eq:cartier-defect-decomposition}
K_{g\circ f}(D)=
\sum_{f_\circ(C)\subset I_D(g)}\mu_C(D)C.
\end{equation}

In particular, the following two conditions are equivalent.
\begin{enumerate}[label=\textup{(\roman*)}]
\item \(f^*g^*D=(g\circ f)^*D\).
\item There is no prime divisor \(C\subset X\) satisfying \(f_\circ(C)\subset I_D(g)\).
\end{enumerate}
\end{theorem}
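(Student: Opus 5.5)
The plan is to compute both pullbacks locally at the generic point \(\eta_C\) of an arbitrary prime divisor \(C\subset X\) and compare their coefficients along \(C\). This mirrors the argument for Theorem~\ref{thm:functoriality-very-ample}, with the components of \(\Phi_A\circ g\) replaced by the numerator and denominator of \(s\circ g\). Since \(I(f)\) has codimension at least \(2\), the map \(f\) is regular at \(\eta_C\). Put \(B=f_\circ(C)\), so that \(f(\eta_C)=\eta_B\) and \(f\) induces a local homomorphism \(f^\sharp:\OO_{Y,\eta_B}\to\OO_{X,\eta_C}\). Because \(Y\) is smooth, \(\OO_{Y,\eta_B}\) is a UFD, so \(s\circ g=a/b\) in lowest terms makes sense there. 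The independence claim then follows as in the remark on \(I_D(g)\): changing \(s\) by a unit, or \((a,b)\) by a common unit, changes \(\ord_C(a\circ f)\) and \(\ord_C(b\circ f)\) by the same amount, namely zero, since units of \(\OO_{Y,\eta_B}\) pull back to units of \(\OO_{X,\eta_C}\).

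Next I identify the two coefficients along \(C\). First, \(g^*D\) near \(\eta_B\) is the Cartier divisor \(\operatorname{div}(a)-\operatorname{div}(b)\), with \(\operatorname{div}(b)\) removed in the sense of the pullback definition \(p_*q^*D\). I would check that the Cartier divisor \(g^*D\) is given locally by the rational function \(s\circ g\); this holds because \(p_*q^*D\) and \(\operatorname{div}(s\circ g)\) agree away from codimension-two subsets of \(Y\), and both are Cartier on smooth \(Y\). Then \(f^*\) of it at \(\eta_C\), with \(f\) regular there, is \(\operatorname{div}(a\circ f)-\operatorname{div}(b\circ f)\), so its coefficient is \(\ord_C(a\circ f)-\ord_C(b\circ f)\). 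Here I rely on the fact that for \(f\) regular near \(\eta_C\), the pullback of Definition~\ref{def:pullback} agrees with the usual pullback of local equations, since \(p\) is an isomorphism over a neighborhood of \(\eta_C\). Second, \((g\circ f)^*D\) at \(\eta_C\) is \(\operatorname{div}\) of \(s\circ g\circ f=(a\circ f)/(b\circ f)\) after reducing to lowest terms in the DVR \(\OO_{X,\eta_C}\). The same reasoning (\(g\circ f\) regular at \(\eta_C\) as a map to \(Z\), and \(D\) effective) shows the coefficient is \(\ord_C(s\circ g\circ f)\) reduced by the common factor, i.e.\ \(\ord_C(a\circ f)-\ord_C(b\circ f)\) with the overlap removed.

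This second identification is the step I expect to be the main obstacle. Naively the two coefficients agree as orders of a rational function, so the definitions must be pinned down. The intended meaning, consistent with the very ample case, is that \(f^*g^*D\) is computed from the pair \((a\circ f,b\circ f)\) without cancellation: effectively, its coefficient counts \(\ord_C(a\circ f)\) as the zero part and \(\ord_C(b\circ f)\) as the pole part separately. Meanwhile \((g\circ f)^*D\) cancels the common order \(\min\{\ord_C(a\circ f),\ord_C(b\circ f)\}\). I would make this precise by writing \(D=(D+mH)-mH\) with \(H\) very ample and both terms very ample. Then I apply Theorem~\ref{thm:functoriality-very-ample} to each term, and express \(a,b\) through sections of \(\OO(g^*(D+mH))\) and \(\OO(g^*(mH))\) near \(\eta_B\), so that the defect difference becomes exactly \(\mu_C(D)\). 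This yields the coefficient formula.

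Finally, \(\mu_C(D)>0\) if and only if both \(a\circ f\) and \(b\circ f\) vanish along \(C\). Because \(f\) maps \(\eta_C\) to \(\eta_B\), this means \(a\) and \(b\) vanish at \(\eta_B\), which is exactly the condition \(f_\circ(C)\subset I_D(g)\). Summing over \(C\) then gives \eqref{eq:cartier-defect-decomposition}, effectivity, and the equivalence of (i) and (ii).
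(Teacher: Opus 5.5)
Your proposal has a genuine gap at the step that carries the content of the theorem: computing the coefficient of \(C\) in \(f^*g^*D\).

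In your second paragraph you identify \(g^*D\) near \(\eta_B\) with \(\operatorname{div}(s\circ g)=\operatorname{div}(a)-\operatorname{div}(b)\), on the grounds that \(p_*q^*D\) and \(\operatorname{div}(s\circ g)\) agree off codimension two. This fails exactly in the case of interest. The divisor \(g^*D=p_*q^*D\) is effective, while \(B\subset I_D(g)\) forces \(b\) to be a non-unit at \(\eta_B\). So \(\operatorname{div}(s\circ g)\) has negative coefficients along the components of \(\operatorname{div}(b)\) through \(B\). Along those components \(s\circ g\) has a genuine pole, so \(s\) is not a local equation of \(D\) at their image, and the two divisors differ in codimension one. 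With your identification, the coefficient of \(C\) in \(f^*g^*D\) would be \(\ord_C(a\circ f)-\ord_C(b\circ f)\), which can even be negative, and no defect appears---as you observe yourself.

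Your third paragraph does not close this. The ``intended meaning'' of counting zero and pole parts separately describes the desired answer; it is not an argument. The reduction through \(D=(D+mH)-mH\) and Theorem~\ref{thm:functoriality-very-ample} stops at ``express \(a,b\) through sections \(\ldots\) so that the defect difference becomes exactly \(\mu_C(D)\)'', which is the entire claim. That step is not routine. Linearity gives \(\ord_C K_{g\circ f}(D)=\ord_C K_{g\circ f}(D+mH)-\ord_C K_{g\circ f}(mH)\). But take the local equation this route naturally produces: \(s=\sigma_DV_0/W_0\), with \(\sigma_D\) the section defining \(D\) and \(V_0\in H^0(\OO_Z(mH))\), \(W_0\in H^0(\OO_Z(D+mH))\) general. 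Let \(e,v_0,w_0\) be local equations at \(\eta_B\) of \(g^*D\), \(g^*\operatorname{div}V_0\), \(g^*\operatorname{div}W_0\). Then \(a=e\,v_0\) and \(b=w_0\). By Theorem~\ref{thm:functoriality-very-ample} and generality, \(\ord_C(v_0\circ f)=\ord_C K_{g\circ f}(mH)\). Likewise \(\ord_C(w_0\circ f)=\ord_C K_{g\circ f}(D+mH)\le\ord_C((e\,v_0)\circ f)\), the inequality because \(\sigma_DV_0\in H^0(\OO_Z(D+mH))\). Hence \(\min\{\ord_C(a\circ f),\ord_C(b\circ f)\}=\ord_C K_{g\circ f}(D+mH)\). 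This overshoots the true coefficient by \(\ord_C K_{g\circ f}(mH)\), which is positive whenever \(f_\circ(C)\subset I(g)\).

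The missing idea, which is what the paper's proof uses, is an asymmetry between the two pullbacks. For a suitably chosen \(s\), the effective divisor \(g^*D\) is cut out near \(\eta_B\) by the reduced numerator alone, \(g^*D=\operatorname{div}(a)\), while \(b\) is a local equation of the pullback of the polar divisor of \(s\). Since \(f\) is regular at \(\eta_C\) with \(f(\eta_C)=\eta_B\), the coefficient of \(C\) in \(f^*g^*D\) is \(\ord_C(a\circ f)\), with nothing subtracted. On the other hand, \(g\circ f\) is regular at \(\eta_C\), so \((g\circ f)^*D\) is read off from \((a\circ f)/(b\circ f)\) after cancelling the common order \(\mu_C(D)\). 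This gives \(\ord_C(a\circ f)-\mu_C(D)\), so the difference is \(\mu_C(D)\), and your last paragraph then yields the criterion and the decomposition.

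Be aware that the identification \(g^*D=\operatorname{div}(a)\), which the paper states without comment, depends on choosing \(s\) well. One choice that works is \(s=\sigma_D/\tau\) with \(\tau\in H^0(\OO_Z(D))\) such that \(g^*\operatorname{div}\tau\) and \(g^*D\) have no common component through \(B\), and \((g\circ f)(\eta_C)\notin\operatorname{div}\tau\). The computation above shows the identification fails for other local equations. For the same reason, your independence argument is incomplete. When \(B\subset I(g)\), a function that is a unit on an open subset of \(Z\) need not pull back under \(g\) to a unit of \(\OO_{Y,\eta_B}\). Changing \(s\) can therefore change \(a\) and \(b\) by non-units, and \(\mu_C(D)\) with them.
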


\begin{proof}
Let \(C\) be an arbitrary prime divisor on \(X\), and let \(\eta_C\) be its generic point. Since \(X\) is smooth and \(f\) is a dominant rational map, \(\eta_C\notin I(f)\). Therefore
\[
B:=f_\circ(C)
\]
is defined as an irreducible subvariety of \(Y\).

Let \(s\circ g=a/b\) be a reduced fractional representation near the generic point of \(B\). With this local representation, \(g^*D\) is locally given by \(a=0\). Hence the coefficient of \(C\) in \(f^*g^*D\) is
\[
\ord_C(a\circ f).
\]

On the other hand, the pullback of \(D\) by the composite \(g\circ f\) is given by
\[
s\circ g\circ f
=
\frac{a\circ f}{b\circ f}.
\]
In terms of orders of vanishing along \(C\), the numerator and denominator vanish commonly to order \(\mu_C(D)\). Therefore, after removing this common factor, the coefficient of \(C\) in the zero divisor is
\[
\ord_C(a\circ f)-\mu_C(D).
\]

It follows that the difference between the coefficients in the two pullbacks is \(\mu_C(D)\). Thus the coefficient of \(K_{g\circ f}(D)\) along \(C\) is \(\mu_C(D)\).

Moreover, \(\mu_C(D)>0\) is equivalent to
\[
\ord_C(a\circ f)>0
\quad\text{and}\quad
\ord_C(b\circ f)>0.
\]
This is equivalent to the vanishing of both \(a\) and \(b\) at the generic point of \(B=f_\circ(C)\), namely to
\[
f_\circ(C)\subset I_D(g).
\]

Applying this to all prime divisors \(C\) on \(X\), if functoriality fails, we obtain
\[
K_{g\circ f}(D)=
\sum_{f_\circ(C)\subset I_D(g)}\mu_C(D)C.
\]
Since all coefficients \(\mu_C(D)\) are positive, \(K_{g\circ f}(D)\) is in this case a nonzero effective divisor.

Conversely, if there is no prime divisor \(C\) satisfying \(f_\circ(C)\subset I_D(g)\), then \(\mu_C(D)=0\) for all \(C\), and the two pullbacks coincide. Thus conditions \textup{(i)} and \textup{(ii)} are equivalent.
\end{proof}

\subsection{Degree drops in iteration}

The main case considered in this paper is that of a self-map. Rewriting Theorem~\ref{thm:cartier-functoriality} in this case gives the following statement.

\begin{corollary}[Degree drops in iteration]
\label{cor:iterate-degree-drop}
Let \(X\) be a smooth projective variety, and let
\[
f:X\dashrightarrow X
\]
be a dominant rational self-map. Let \(D\) be an effective Cartier divisor on \(X\).

For a prime divisor \(C\) on \(X\), set
\[
B=(f^n)_\circ(C).
\]
Near the generic point of \(B\), take a local equation \(s=0\) of \(D\), and write
\[
s\circ f=\frac{a}{b}
\]
in lowest terms. Set
\begin{equation}
\label{eq:mu_nC}
\mu_{n,C}(D)
=
\min \{  \ord_C (a \circ f^n),  \ord_C (b \circ f^n) \}.
\end{equation}

Then the coefficient of
\begin{equation}
\label{eq:iterate-defect-divisor}
K_{n+1}(D):=
(f^n)^*f^*D
-
(f^{n+1})^*D
\end{equation}
along \(C\) is \(\mu_{n,C}(D)\). Moreover,
\[
\mu_{n,C}(D)>0
\quad\Longleftrightarrow\quad
(f^n)_\circ(C)\subset I_D(f).
\]

Consequently, if the functoriality in the iteration,
\[
(f^n)^*f^*D=(f^{n+1})^*D,
\]
fails, then \(K_{n+1}(D)\) is a nonzero effective divisor and can be written as
\begin{equation}
\label{eq:iterate-defect-decomposition}
K_{n+1}(D)
=
\sum_{(f^n)_\circ(C)\subset I_D(f)}
\mu_{n,C}(D)C.
\end{equation}

In particular, functoriality in the iteration fails if and only if there exists a prime divisor \(C\subset X\) satisfying
\begin{equation}
\label{eq:iterate-defect-condition}
(f^n)_\circ(C)\subset I_D(f).
\end{equation}
\end{corollary}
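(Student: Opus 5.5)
This is a direct specialization of Theorem~\ref{thm:cartier-functoriality}. We apply that theorem with source \(X\), middle variety \(Y=X\) and target \(Z=X\), and take the first map to be \(f^n:X\dashrightarrow X\) and the second map to be \(g=f\). Both maps are dominant rational self-maps of the smooth projective variety \(X\): \(f^n\) is dominant because compositions of dominant rational maps are dominant, so it is a well-defined dominant rational map. The composite is \(g\circ f^n=f^{n+1}\). Hence the defect divisor of the theorem,
\[
K_{g\circ f^n}(D)=(f^n)^*f^*D-(f^{n+1})^*D,
\]
is exactly \(K_{n+1}(D)\) from \eqref{eq:iterate-defect-divisor}.

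Next we match the local data. For a prime divisor \(C\subset X\), the center appearing in the theorem is \(B=(f^n)_\circ(C)\). This is defined because the generic point of \(C\) lies outside \(I(f^n)\), which has codimension at least \(2\). The theorem writes the local equation as \(s\circ g=a/b\) in lowest terms near \(\eta_B\); with \(g=f\) this is precisely the representation used in the corollary. The theorem's multiplicity \(\min\{\ord_C(a\circ f^n),\ord_C(b\circ f^n)\}\) is then literally \(\mu_{n,C}(D)\) in \eqref{eq:mu_nC}, and its independence of choices is inherited from the theorem. The theorem therefore gives three things directly:
\begin{itemize}
\item the coefficient of \(K_{n+1}(D)\) along \(C\) is \(\mu_{n,C}(D)\);
\item \(\mu_{n,C}(D)>0\) if and only if \((f^n)_\circ(C)\subset I_D(f)\);
\item the decomposition \eqref{eq:iterate-defect-decomposition}, together with the equivalence of failure of functoriality with the existence of such a \(C\).
\end{itemize}

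The only point to check is that the earlier argument does not use \(f\) being a morphism or any regularity of the first map beyond its generic point. It does not: the proof of Theorem~\ref{thm:cartier-functoriality} only needs the first map to be regular at \(\eta_C\), so that \(a\circ f^n\) and \(b\circ f^n\) have well-defined orders along \(C\). I expect no real obstacle. The step deserving a sentence is that \(f^n\), as a rational map, is regular at the generic point of every prime divisor. This holds by normality (indeed smoothness) of \(X\) together with properness of the target. With that noted, the corollary is a verbatim substitution.
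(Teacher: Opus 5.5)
Your proposal is correct and is the same argument as the paper's one-line proof: set \(Y=Z=X\) in Theorem~\ref{thm:cartier-functoriality} and put \(f^n\) and \(f\) in place of \(f\) and \(g\). Your extra checks (that \(f^n\) is dominant, that \(f^n\) is regular at \(\eta_C\), and that \(f\circ f^n=f^{n+1}\)) are the routine verifications the paper leaves implicit.
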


\begin{proof}
In Theorem~\ref{thm:cartier-functoriality}, take \(Y=Z=X\), and replace \(f\) and \(g\) in the theorem by \(f^n\) and \(f\), respectively.
\end{proof}

\section{Orbit graph varieties and divisorial components}

\subsection{Finite-window orbit graphs}

From now on, let \(X\) be a smooth projective variety and let \(f:X\dashrightarrow X\) be a birational self-map. We write \(X_n=X\) for the copy of \(X\) at time \(n\).

For a finite interval \(I=[a,b]\subset\mathbb Z\), we define the finite-window orbit graph variety, or simply the orbit graph,
\[
\Omega_I \subset X_a\times X_{a+1}\times\cdots\times X_b,
\]
as the Zariski closure of orbit segments \((P_a,P_{a+1},\ldots,P_b)\) starting from \(P_a\in X_a\) for which \(P_{i+1}=f(P_i)\) \((i=a,\ldots,b-1)\) is defined at every step.

We denote by \(p_n:\Omega_I\to X_n\) the projection to the factor at time \(n\). Since the finite product is projective, \(\Omega_I\) is also projective. In general, however, it need not be smooth or normal.

As a basic property, on the dense open subset of \(\Omega_I\) consisting of genuine orbit segments, we have 
\[ p_n=f^k\circ p_{n-k} \]
for \(0\le k\le n-a\). Hence the same equality holds as an equality of rational maps on \(\Omega_I\). For a prime divisor, the corresponding statement concerns the component on the orbit graph determined by following its generic point; it should not be interpreted as a functoriality statement for pullbacks.

\subsection{Prime divisors and centers on the orbit graph}

Since \(p_n:\Omega_I\to X_n\) is a morphism, a Cartier divisor \(D_n\) on \(X_n\) gives a Cartier divisor \(p_n^*D_n\) on the orbit graph. On the other hand, points and lower-dimensional subvarieties appearing in the singularity patterns of Section 2 are usually not divisors on \(X_n\). For instance, the point \([0:0:1:1]\) and the line \(x_2=x_3=0\) in \(\PP^3\) are not divisors on \(\PP^3\). They do, however, appear as images at time \(n\) of prime Weil divisors on the orbit graph. We treat them through prime Weil divisors on the orbit graph.

Since \(\Omega_I\) is not necessarily normal, we take its normalization
\[
\nu:\Omega_I^\nu\to\Omega_I.
\]
For a prime Weil divisor, that is, an irreducible codimension-one closed subvariety
\[
E\subset \Omega_I^\nu,
\]
the valuation
\[
v_E:=\ord_E
\]
defines a divisorial valuation on the function field
\[
K(\Omega_I^\nu)=K(\Omega_I).
\]
It assigns to a rational function on \(\Omega_I^\nu\) its order along \(E\).

We define the center of the prime Weil divisor \(E\) at time \(n\) by
\begin{equation}
\label{eq:center-time-n}
C_n(E):=\overline{(p_n\circ \nu)(E)}\subset X_n.
\end{equation}
This center need not be a divisor on \(X_n\); it may be a point or a lower-dimensional subvariety.

Conversely, from a prime divisor \(C\subset X_m\) on the copy at time \(m\),
one can obtain a prime Weil divisor on the orbit graph.  Indeed,
\(p_m\circ\nu:\Omega_I^\nu\to X_m\) is birational and hence is an isomorphism
over the generic point of \(C\).  We denote by
\begin{equation}
\label{eq:prime-divisor-from-time-m}
E(C,m)\subset \Omega_I^\nu
\end{equation}
the Zariski closure of the image of the generic point of \(C\) under
\((p_m\circ\nu)^{-1}\).  This is a prime Weil divisor on
\(\Omega_I^\nu\), and \(C_m(E(C,m))=C\).

Many of the divisorial components explicitly tracked in this paper are obtained
in this way.  We use the superscript \({}^{\circ}\) to denote the component
determined by the orbit of the generic point of a divisor specified at some
time.  Thus, if \(C\subset X_m\) is a prime divisor, then \(C^\circ\) denotes
this component; as a prime Weil divisor on the normalized orbit graph, it is
\(E(C,m)\).

The same component may also be described by its center at another time.  More
precisely, if
\[
C_\ell(E(C,m))=C'
\]
is a prime divisor on \(X_\ell\), then \(C^\circ\) and \((C')^\circ\) denote
the same component.  For example, in Section~\ref{ex:dp2}, the component whose
center at time \(n\) is \((x_n=0)\) has center \((y_{n+5}=0)\) at time
\(n+5\), and hence
\[
(y_{n+5}=0)^\circ=(x_n=0)^\circ .
\]

We next define the decomposition coefficient of a Cartier divisor \(D_n\) on \(X_n\) along a prime Weil divisor on the orbit graph. The pullback
\[
\nu^*p_n^*D_n
\]
is a Cartier divisor on \(\Omega_I^\nu\). Viewing it as a Weil divisor, we define the coefficient along a prime Weil divisor \(E\) by
\begin{equation}
\label{eq:def-alpha-n}
\alpha_n(E;D_n)
:=
\ord_E(\nu^*p_n^*D_n).
\end{equation}
Locally, if \(D_n\) is given on an open set \(U\subset X_n\) by a local equation \(g\), then
\[
\alpha_n(E;D_n)
=
\ord_E(g\circ p_n\circ\nu).
\]
Changing the local equation \(g\) only multiplies it by a unit, and its pullback to \(\Omega_I^\nu\) is also a unit near the generic point of \(E\). Hence this value is well-defined.

Since \(\Omega_I^\nu\) is a normal projective variety, for a Cartier divisor \(D_n\) on \(X_n\) we have a decomposition into prime Weil divisors
\begin{equation}
\label{eq:pullback-weil-decomposition}
\nu^*p_n^*D_n
=
\sum_E \alpha_n(E;D_n)E.
\end{equation}
Here \(E\) ranges over the prime Weil divisors on \(\Omega_I^\nu\).

In particular, when \(D_n\subset X_n\) is a prime divisor, using the prime Weil divisor
\[
D_n^\circ:=E(D_n,n)
\]
corresponding to orbit segments through the generic point of \(D_n\), we may write
\begin{equation}
\label{eq:principal-component-decomposition}
\nu^*p_n^*D_n
=
D_n^\circ
+
\sum_E \alpha_n(E;D_n)E.
\end{equation}
Here the sum ranges over prime Weil divisors \(E\) such that \(C_n(E)\subset D_n\) and \(\operatorname{codim}_{X_n} C_n(E)>1\).

We call a decomposition of the pullback \(\nu^*p_n^*D_n\) of a time-indexed divisor into prime Weil divisors on the orbit graph, with components indicated by their centers at each time, a Halburd-type divisor decomposition. In Halburd-type notation, this decomposition is abbreviated using centers at time \(n\) as
\begin{equation}
\label{eq:halburd-divisor-decomposition}
D_n
=
D_n^\circ
+
\sum_E
\alpha_n(E;D_n)\,C_n(E).
\end{equation}
Here the symbols \(C_n(E)\) on the right-hand side do not mean divisors on
\(X_n\); they are notation for the corresponding prime Weil divisors \(E\)
on the normalized orbit graph, represented by their centers at time \(n\).
Thus \eqref{eq:halburd-divisor-decomposition} should be understood as a
shorthand for the divisor equality \eqref{eq:principal-component-decomposition}
on \(\Omega_I^\nu\), not as a divisor equality on \(X_n\).  If several
prime Weil divisors with the same center appear, as in infinitely near
situations, they must be distinguished.

In general, not all components appearing in these sums are directly involved in degree drops. Depending on the choice of the divisor \(D_n\), components arising from ordinary divisor decompositions, or components of singularity patterns not directly related to degree drops, may also appear.

\subsection{Local computation of decomposition coefficients}
\label{subsec:local-computation-decomposition-coefficients}

The decomposition coefficients can be computed using local parameters, just as in the usual computation of singularity patterns. Suppose, for example, that \(E=E(C,m)\), and that near the generic point of \(C\subset X_m\), the subvariety \(C\) is defined by a local equation
\[
s=0.
\]
Put
\[
s=\varepsilon,
\]
and take the remaining local coordinates to be generic. Using the iteration relation, express local coordinates at time \(n\) as functions of \(\varepsilon\), say
\[
z_1(\varepsilon),\ldots,z_r(\varepsilon).
\]
If a local equation of \(D_n\) is
\[
g(z_1,\ldots,z_r)=0,
\]
and
\[
g(z_1(\varepsilon),\ldots,z_r(\varepsilon))
=
\varepsilon^\alpha u(\varepsilon),
\qquad
u(0)\neq0,
\]
then the exponent \(\alpha\) is
\[
\alpha_n(E;D_n).
\]
This value can be computed locally at the generic point of \(C\), without explicitly constructing the normalization. The normalization is used to describe this value as the coefficient along a prime Weil divisor on the orbit graph.

For example, in the basic example on \(\PP^3\) in Section~\ref{example:working-cremona-degree-drop}, the decomposition \eqref{eq:p3-basic-halburd-decomposition} appears. The terms on the right-hand side do not represent divisors on \(X_n\); rather, they denote the corresponding prime Weil divisors on the orbit graph by their centers at the indicated times. In particular, \((x_{0,n-1}=0)^\circ\) means that the prime Weil divisor determined by the generic point of the hyperplane at time \(n-1\) appears in the pullback of the divisor \((x_{0,n}=0)\) at time \(n\).

Similarly, in the basic example on \(\PP^1\times\PP^1\) in Section~\ref{ex:dp2}, for the divisor \((x_n=1)\) at time \(n\), we used the decomposition \eqref{eq:p1p1-basic-halburd-decomposition} in which the coefficient \(2\) appears. Indeed, put \(x_{n-2}=\varepsilon\) at time \(n-2\) and keep \(y_{n-2}\) generic. Then
\[
x_n=1+(1-y_{n-2})\varepsilon^2+O(\varepsilon^3).
\]
Since \(y_{n-2}\) is generic, the coefficient \(1-y_{n-2}\) is nonzero. Hence \(\ord_{(x_{n-2}=0)^\circ}(x_n-1)=2\), and \((x_{n-2}=0)^\circ\) appears with coefficient \(2\) in \eqref{eq:p1p1-basic-halburd-decomposition}.

\section{Linear relations for degrees}

In this section we organize the linear relations appearing in the divisor-orbit decomposition method using the divisor degrees defined in Section~\ref{subsec:divisor-degree}. This method uses two kinds of degree relations.  The first kind is obtained from Halburd-type divisor decompositions of time-indexed divisors.  The second kind is obtained from degree-drop divisors.

From now on, the degrees of time-indexed divisors and prime Weil divisors on the orbit graph are always measured after pulling them back to the space \(X_0\simeq X\) at time \(0\). With this convention, we simply write \(\deg\) without an index.

\subsection{Two kinds of degree relations}

Let \(D\subset X=X_0\) be a fixed Cartier prime divisor, and let \(D_n\subset X_n\) be its copy at time \(n\). We define
\begin{equation}
\label{eq:degree-of-time-n-divisor}
\deg(D_n)
:=
\deg_{\mathcal B}\bigl((f^n)^*D\bigr).
\end{equation}
Thus \(\deg(D_n)\) is not the degree measured directly on \(X_n\), but the degree measured after pulling back to time \(0\). For an arbitrary effective Cartier divisor, one applies the following discussion to each prime component and adds the resulting relations with coefficients.

For a prime Weil divisor \(E\) on the normalization of the orbit graph, we define
\begin{equation}
\label{eq:degree-of-orbit-divisor}
\deg(E)
:=
\begin{cases}
\deg_{\mathcal B}(C_0(E)),&
C_0(E)\text{ is a divisor on }X_0,\\
0,&
\operatorname{codim}_{X_0}C_0(E)>1.
\end{cases}
\end{equation}
With the same convention, \(\deg(D_n^\circ)\) is the degree of the prime Weil divisor \(D_n^\circ\) on the orbit graph, measured through its center at time \(0\).

Applying degree to \eqref{eq:halburd-divisor-decomposition}, we obtain
\begin{equation}
\label{eq:degree-from-halburd-decomposition}
\deg(D_n)
=
\deg(D_n^\circ)
+
\sum_E
\alpha_n(E;D_n)\deg(E).
\end{equation}
Here the sum is taken over the prime Weil divisors \(E\) appearing in \eqref{eq:halburd-divisor-decomposition}. This is the basic linear relation obtained from a Halburd-type divisor decomposition. Depending on the choice of the divisor \(D_n\), however, extra components that are not directly related to degree drops may appear and make the computation inconvenient.

\begin{example}[An example in which extra divisor sequences appear]
We use the basic example on \(\PP^3\) from Section~\ref{example:working-cremona-degree-drop} to see how extra divisor sequences may appear. Consider the same map \(f=A\circ\sigma\), and take the hyperplane at time \(n\)
\[
D_n=\{2x_{0,n}-x_{1,n}-x_{2,n}-x_{3,n}=0\}\subset X_n.
\]
This is the image of the coordinate hyperplane \(H_0=(x_0=0)\) by \(A\). Indeed, since \(A^{-1}D_n=H_0\), pulling \(D_n\) back one time step amounts to pulling back \(H_0\) by the standard Cremona transformation. Hence on the orbit graph one obtains the decomposition
\[
D_n
=
D_n^\circ
+
(x_{1,n-1}=0)^\circ
+
(x_{2,n-1}=0)^\circ
+
(x_{3,n-1}=0)^\circ.
\]
Here \(D_n^\circ\) is the component corresponding to generic points of \(D_n\), and its center at time \(n-1\) is the point \([1:0:0:0]\). The remaining three components come from the ordinary decomposition \(\sigma^*H_0=(x_1=0)+(x_2=0)+(x_3=0)\).

Thus, if one uses this \(D_n\), the degree relations also involve the degree sequences of \((x_{1,n-1}=0)^\circ\), \((x_{2,n-1}=0)^\circ\), and \((x_{3,n-1}=0)^\circ\). If these components can be described as periodic or finite-type singularity patterns, one can write linear relations including them. However, if the aim is to compute the degree drop arising from \(H_0=(x_0=0)\) with the fewest auxiliary variables, then this choice is inconvenient because it introduces extra divisor sequences. In such a situation it is more effective to choose a divisor whose decomposition closes with a small number of known divisor sequences, as in \eqref{eq:p3-basic-halburd-decomposition}.
\end{example}

We next describe the degree relation obtained from a degree-drop divisor. Let \(D_{n+1}\subset X_{n+1}\) be an effective Cartier divisor at time \(n+1\). Applying Corollary~\ref{cor:iterate-degree-drop} with \(D=D_{n+1}\), equation \eqref{eq:iterate-defect-divisor} becomes
\begin{equation}
\label{eq:iterate-defect-divisor2}
\sum_{\substack{
C\\
(f^n)_\circ (C)\subset I_{D_{n+1}}(f)
}}
\mu_{n,C}(D_{n+1}) C=
 (f^n)^* f^*D_{n+1}
- (f^{n+1})^* D_{n+1}.
\end{equation}
Taking degrees gives
\begin{equation}
\label{eq:degree-drop-relation}
\sum_{\substack{
C\\
(f^n)_\circ (C)\subset I_{D_{n+1}}(f)
}}
\mu_{n,C}(D_{n+1})\deg(C)=
\deg\bigl(f^*D_{n+1}\bigr)
-
\deg(D_{n+1}).
\end{equation}
Here the sum ranges over all prime divisors \(C\) on \(X_0\) such that \((f^n)_\circ(C)\subset I_{D_{n+1}}(f)\), and \(\mu_{n,C}(D_{n+1})\) is given by \eqref{eq:mu_nC}.

\subsection{The difference between divisor decompositions and degree drops}
\label{subsec:decomposition-vs-degree-drop}

As an example, consider the map \eqref{eq:p1p1-map} from Section~\ref{ex:dp2}. Let \(a\) be a generic constant, and take the prime divisor on \(X_{n+1}\)
\[
D_{n+1}:=(x_{n+1}=a).
\]
This divisor remains prime on the orbit graph, and its Halburd-type divisor decomposition is trivial.

On the other hand, the degree drop in the one-step iteration \(n\to n+1\) is nontrivial. Near \((0,\infty)\), put
\[
u=x_n,
\qquad v=\frac1{y_n}.
\]
Then
\[
x_{n+1}-a
=
1-a-\frac1{x_n}-y_n
=
\frac{(1-a)uv-u-v}{uv}.
\]
Thus the indeterminate point associated with the one-step pullback of \(D_{n+1}\) is \((0,\infty)_n\).

By the singularity pattern appearing in \eqref{eq:p1p1-zero-decomposition},
\[
(x_{n-4}=0)^\circ \longrightarrow (0,\infty)_n.
\]
At time \(n-4\), put
\(
(x_{n-4},y_{n-4}) = (\varepsilon, b)
\)
with \(b\) generic. Then at time \(n\),
\[
u=x_n=-\varepsilon+O(\varepsilon^2),
\qquad
v=\frac1{y_n}=\varepsilon+O(\varepsilon^2).
\]
A further computation gives
\[
(1-a)uv-u-v
=
(a-b)\varepsilon^2+O(\varepsilon^3),
\qquad
uv
=
-\varepsilon^2+O(\varepsilon^3).
\]
Since \(a\) and \(b\) are generic, we may assume \(a-b\neq0\). Therefore the numerator and the denominator have a common factor of order \(2\) along \((x_{n-4}=0)^\circ\). Hence the degree-drop divisor in the one-step iteration is
\[
K_{n+1}(D)=2(x_{n-4}=0)^\circ.
\]

Thus the degree relation corresponding to \eqref{eq:degree-drop-relation} is
\[
d_n+d_{n-1}
=
d_{n+1}+2t_{n-4},
\]
or equivalently
\[
d_{n+1}=d_n+d_{n-1}-2t_{n-4}.
\]

In this way, the degree relation \eqref{eq:degree-from-halburd-decomposition} obtained from a Halburd-type divisor decomposition of a time-indexed divisor and the degree-drop relation \eqref{eq:degree-drop-relation} obtained from a degree-drop divisor are distinct.

\subsection{Finite-type condition and closed linear systems}

Whether the two kinds of degree relations described above yield a closed system of linear relations depends on whether the divisorial components appearing in them can be described, up to time shifts, by finitely many divisor sequences.

\begin{proposition}[Closed system of linear relations under a finite-type condition]
\label{prop:finite-type-closed-linear-system}
Let \(D\) be an effective Cartier prime divisor on \(X\), and let \(D_n\) be its copy at time \(n\). Assume that, among the divisorial components appearing in the Halburd-type divisor decomposition of \(D_n\) at each time \(n\) and in the degree-drop relations in the iteration, those contributing to degree can be expressed, up to time shifts, by finitely many divisor sequences
\[
E^{(1)}_n,\ldots,E^{(r)}_n.
\]
Assume further that the decomposition coefficients and the degree-drop coefficients are constant or periodic in \(n\), and that all relations can be written using only time shifts of these finitely many divisor sequences.

Then the degree sequence
\[
\deg(D_n)
\]
and the auxiliary degree sequences
\[
u^{(j)}_n:=\deg E^{(j)}_n
\qquad (j=1,\ldots,r)
\]
satisfy a closed system consisting of finitely many linear difference relations.
\end{proposition}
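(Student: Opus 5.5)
The plan is to assemble the closed system from two identities already established: the degree relation \eqref{eq:degree-from-halburd-decomposition}, obtained from the Halburd-type decomposition \eqref{eq:principal-component-decomposition}, and the degree-drop relation \eqref{eq:degree-drop-relation}, obtained from Corollary~\ref{cor:iterate-degree-drop}. Both are exact equalities of degree vectors in \(\mathbb Z^\rho\). This is because \(\deg_{\mathcal B}\) is additive on Cartier divisors and factors through numerical classes. Components \(E\) whose center \(C_0(E)\) has codimension \(>1\) contribute \(0\) by \eqref{eq:degree-of-orbit-divisor}. Hence, in each relation, only the components ``contributing to degree'' survive, and these are exactly the ones covered by the hypothesis.

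First, for each \(n\), I write \eqref{eq:degree-from-halburd-decomposition} for \(D_n\). By assumption, every contributing component \(E\) in it equals \(E^{(j)}_{n-k}\) for some \(j\) and some shift \(k\) from a finite set \(S\). The coefficient \(\alpha_n(E;D_n)\) is constant or periodic in \(n\). The term \(D_n^\circ\) is itself one of the sequences, or is expressible through them. This gives
\[
\deg(D_n)=\sum_{j=1}^r\sum_{k\in S} a_{j,k}(n)\,u^{(j)}_{n-k},
\]
with the \(a_{j,k}\) periodic. Next, I take \(D_{n+1}\) (or the finitely many auxiliary divisors used) in \eqref{eq:degree-drop-relation}. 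The term \(\deg(f^*D_{n+1})\) must be rewritten via pullback by \(f^n\) of the fixed divisor \(f^*D\). Writing \(f^*D\) as a finite combination of prime divisors (each of whose time-\(n\) copies is, by hypothesis, among the tracked sequences or \(D\) itself), together with the left side expressed through the \(E^{(j)}\) with periodic \(\mu_{n,C}\), yields a second family of linear relations of the same shape. If the sequences \(E^{(j)}\) themselves are time-indexed divisors, I apply the same two relations to each of them, which produces relations for every unknown.

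Then I handle periodic coefficients. If the common period is \(p\), I replace each unknown sequence by its \(p\) subsequences \(n\equiv c \pmod p\). The coefficients then become constant and the shifts remain bounded. The result is a finite system of linear difference equations with constant coefficients in the finitely many unknowns \(\deg(D_n)\) and \(u^{(j)}_n\). Finiteness follows because only finitely many divisors, finitely many shifts, and finitely many residues occur.

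The main obstacle is making precise that the relations ``close'': the term \(\deg(f^*D_{n+1})\) in the drop relation, and the terms \(\deg(D_n^\circ)\), must be expressible in the chosen unknowns rather than introducing new sequences. I would treat this by reading the hypothesis ``all relations can be written using only time shifts of these finitely many sequences'' as exactly this closure requirement. The proof then reduces to bookkeeping, and no further geometric input is needed beyond Theorem~\ref{thm:cartier-functoriality} and linearity of \(\deg_{\mathcal B}\).
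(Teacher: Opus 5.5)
Your proposal is correct and follows the paper's own argument. Like the paper, you substitute the finitely many tracked sequences into \eqref{eq:degree-from-halburd-decomposition} and \eqref{eq:degree-drop-relation}, and you let the closure hypothesis absorb the term \(\deg(f^*D_{n+1})\), which the paper's proof also simply assumes is expressible in \(\deg(D_n)\) and the \(u^{(j)}\). Your residue-class splitting to make the coefficients constant is a harmless extra refinement. The paper does not need it, since linear relations with periodic coefficients already count as a closed finite system.
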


\begin{proof}
Equation \eqref{eq:degree-from-halburd-decomposition} gives a linear relation at each time \(n\). By assumption, the prime Weil divisors contributing to degree on the right-hand side are, up to time shifts, among the finitely many divisor sequences \(E^{(1)}_n,\ldots,E^{(r)}_n\). Hence the right-hand side can be written as a linear combination of finitely many auxiliary degree sequences
\[
u^{(j)}_{n+k}
=
\deg E^{(j)}_{n+k}.
\]

For degree drops in the iteration, equation \eqref{eq:degree-drop-relation} shows that the common-factor components appearing there are also, under the assumption, expressed by time shifts of the same finitely many divisor sequences. Thus this relation can also be written as a finite linear relation involving
\[
\deg(D_{n+1}),\ \deg(D_n),
\quad
u^{(j)}_{n+k}.
\]

Therefore the degree relations obtained from Halburd-type divisor decompositions and the degree-drop relations involve only
\[
\deg(D_n),
\quad
u^{(1)}_n,\ldots,u^{(r)}_n,
\]
and finitely many of their time shifts. Hence they form a closed system of linear difference relations among finitely many degree sequences.
\end{proof}

For example, in the basic example of Section~\ref{example:working-cremona-degree-drop}, if one chooses the divisor \(H_0=(x_0=0)\), then the only divisor sequence contributing to degree is the time shift of \((x_{0,n}=0)^\circ\), and the relations \eqref{eq:cremona-degree-decomposition} and \eqref{eq:cremona-degree-drop} give a closed linear system. On the other hand, if one chooses \(D=A(H_0)=(2x_0-x_1-x_2-x_3=0)\), then, as seen in the preceding example, the decomposition also contains components corresponding to the three coordinate hyperplanes. In this case, too, the system closes with finitely many linear relations if one also tracks the divisor sequences corresponding to each coordinate hyperplane, but the number of auxiliary degree sequences is larger than in the case of \(H_0\). Thus the choice of divisor affects the size of the resulting linear system.

In this way, even if the divisorial components appearing in degree drops can be described by finitely many divisor sequences, in order to obtain a closed system of linear relations one must choose divisors that detect these components appropriately. Hence the fact that the degree-drop components are described by finitely many divisor sequences does not immediately imply the existence of a divisor for which the Halburd-type divisor decompositions and the degree-drop relations together form a closed finite linear system. We do not pursue a general theory of this point in the present paper, and only pose the following problem.

\begin{problem}
In the divisor-orbit decomposition method, suppose that the divisorial components appearing in degree drops in the iteration can be described, up to time shifts, by finitely many divisor sequences. Is it then possible, by choosing suitable divisors \(D_n\), to obtain from the Halburd-type degree relations \eqref{eq:degree-from-halburd-decomposition} and the degree-drop relations \eqref{eq:degree-drop-relation} a linear system that closes with finitely many divisor sequences, up to time shifts, as unknowns?
\end{problem}

\section{An example with only short singularity patterns}

In this section we treat a second-order rational map considered by
Ramani--Grammaticos--Willox--Mase
\cite{RamaniGrammaticosWilloxMase2017}.  This map is integrable in the
sense that, after resolving the relevant base points, it preserves an
elliptic fibration; the associated rational elliptic surface has a
reducible singular fiber of type \(A_1^{(1)}\).  In that paper, this map
was used as an example for which applying Halburd-type counting directly
as a one-variable second-order dynamical system does not yield a closed
degree relation.  Here we organize this map as a valuation computation on
the orbit graph of \(\PP^1\times\PP^1\).  In this example, to obtain a
closed degree relation, it is necessary to consider not only divisors of
the type \(x=\) constant, but also suitable auxiliary divisors.

\subsection{Degree relations from Halburd-type divisor decompositions}

Let \(\varphi:(x,y)\mapsto (\bar x,\bar y)\) be the birational map on \(\PP^1\times\PP^1\) defined by
\[
  \frac{x+\bar x-2z}{x+\bar x}\cdot
  \frac{x+y-2z}{x+y}
  =\frac{N(x)}{D(x)},
  \qquad
  \bar y=x,
\]
where
\[
N(x)=((x-z)^2-a^2)((x-z)^2-b^2),
\qquad
D(x)=(x^2-c^2)(x^2-d^2).
\]
Here \(a,b,c,d,z\) are generic constants. We write the orbit as \((x_n,y_n)=\varphi^n(x_0,y_0)\).

\begin{figure}[htbp]
\centering
\includegraphics[width=.4\textwidth]{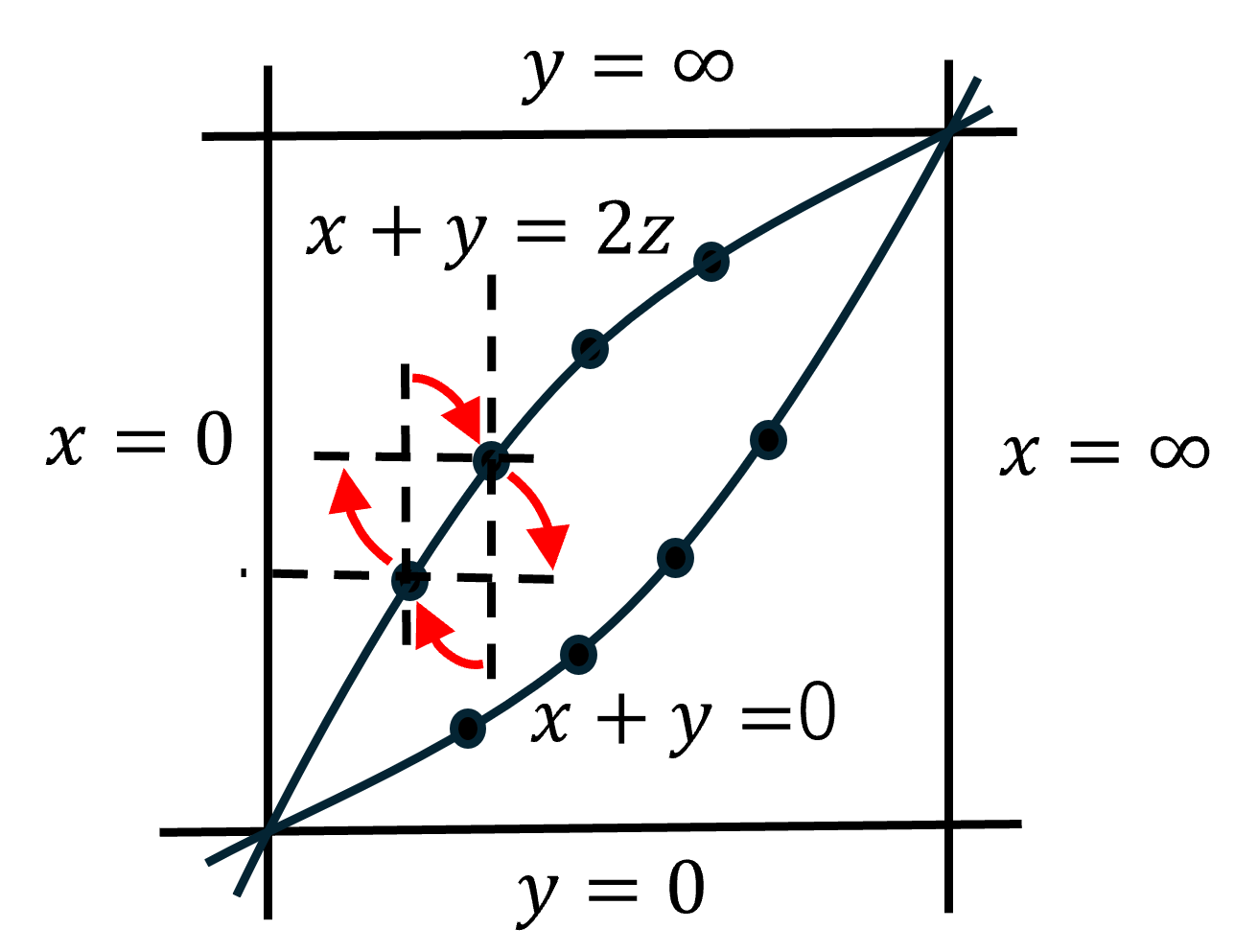}
\caption{The two \((1,1)\)-curves appearing in the example.  After
resolving the indeterminacy points, their union gives a reducible fiber
of the elliptic fibration on the associated rational elliptic surface.}
\label{fig:Ex6}
\end{figure}

The singularity patterns of this map consist only of the following four short symmetric types (Figure~\ref{fig:Ex6}):
\begin{align*}
(z\pm a,\ast)&\longrightarrow (z\mp a,z\pm a)
              \longrightarrow (\ast,z\mp a),\\
(z\pm b,\ast)&\longrightarrow (z\mp b,z\pm b)
              \longrightarrow (\ast,z\mp b),\\
(\pm c,\ast)&\longrightarrow (\mp c,\pm c)
              \longrightarrow (\ast,\mp c),\\
(\pm d,\ast)&\longrightarrow (\mp d,\pm d)
              \longrightarrow (\ast,\mp d).
\end{align*}

For example, on the normalized orbit graph, the pullback of the divisor \((x_n=z+a)\) has the following decomposition into prime Weil divisors:
\begin{align}
\label{eq:decomposition-xn-za}
(x_n=z+a)
&=(x_n=z+a)^\circ +(z+a,z-a)_n \nonumber\\
&=(x_n=z+a)^\circ +(x_{n-1}=z-a)^\circ.
\end{align}

Let
\[
\mathbf d_n:=\operatorname{bideg}x_n\in\mathbb Z^2
\]
be the bidegree of \(x_n\) with respect to \((x_0,y_0)\). Then the bidegree of \(y_n\) is \(\mathbf d_{n-1}\). Set
\[
\mathbf t_n:=\operatorname{bideg}(x_n=z+a)^\circ.
\]
By symmetry,
\[
\operatorname{bideg}(x_n=z\pm a)^\circ
=
\operatorname{bideg}(x_n=z\pm b)^\circ
=
\operatorname{bideg}(x_n=\pm c)^\circ
=
\operatorname{bideg}(x_n=\pm d)^\circ
=
\mathbf t_n.
\]
Therefore \eqref{eq:decomposition-xn-za} gives
\begin{equation}
\label{eq:dntntn}
\mathbf d_n=\mathbf t_n+\mathbf t_{n-1}.
\end{equation}

To obtain a closed formula for the degrees, consider the prime divisor
\(D:=\{x+y=2z\}\) on \(\PP^1\times\PP^1\). Since \(D\) contains the singular centers
\[
(z+a,z-a),\quad (z-a,z+a),\quad (z+b,z-b),\quad (z-b,z+b),
\]
its divisor decomposition on the orbit graph is
\begin{align}
\label{eq:auxiliary-divisor-decomposition-conic}
(x_n+y_n=2z)
&=(x_n+y_n=2z)^\circ
+(z+a,z-a)_n+(z-a,z+a)_n \nonumber\\
&\quad +(z+b,z-b)_n+(z-b,z+b)_n \nonumber\\
&=(x_n+y_n=2z)^\circ
+(x_{n-1}=z-a)^\circ+(x_{n-1}=z+a)^\circ \nonumber\\
&\quad +(x_{n-1}=z-b)^\circ+(x_{n-1}=z+b)^\circ.
\end{align}
Here the first term on the right belongs, on the orbit graph, to the divisor sequence of period \(2\)
\[
(x_n+y_n=2z)^\circ
\longrightarrow
(x_{n+1}+y_{n+1}=0)^\circ
\longrightarrow
(x_{n+2}+y_{n+2}=2z)^\circ.
\]
Thus
\[
\operatorname{bideg}(x_n+y_n=2z)^\circ=(1,1).
\]
Taking bidegrees in \eqref{eq:auxiliary-divisor-decomposition-conic}, we obtain
\[
\mathbf d_n+\mathbf d_{n-1}
=
(1,1)+4\mathbf t_{n-1}.
\]
Together with \eqref{eq:dntntn} and the initial values
\[
\mathbf t_0=(1,0),
\qquad
\mathbf t_1=(3,1),
\]
this gives
\[
\mathbf t_n=
\left(
\frac{(n+1)(n+2)}{2},
\frac{n(n+1)}{2}
\right)
\]
and
\[
\mathbf d_n=\bigl((n+1)^2,n^2\bigr).
\]

\subsection{The degree-drop relation}

We also record the degree-drop relation in this example using the notation of Section 5. Consider the divisor at time \(n+1\)
\[
D_{n+1}:=(x_{n+1}=0).
\]

Pulling \(D_{n+1}\) back one step to time \(n\), we obtain from
\[
\frac{x_n-2z}{x_n}
\frac{x_n+y_n-2z}{x_n+y_n}
=
\frac{N(x_n)}{D(x_n)}
\]
the equation
\[
N(x_n)x_n(x_n+y_n)
-
D(x_n)(x_n-2z)(x_n+y_n-2z)
=
0.
\]
Since the highest-degree terms cancel, this is a divisor of bidegree \((4,1)\) in the variables \((x_n,y_n)\) at time \(n\). Since \(y_n=x_{n-1}\), we have
\[
\operatorname{bideg}\bigl(\varphi^*D_{n+1}\bigr)
=
4\mathbf d_n+\mathbf d_{n-1}.
\]

On the other hand, the bidegree of the divisor obtained by pulling \(D_{n+1}\) directly back to time \(0\) is
\[
\operatorname{bideg}(D_{n+1})=\mathbf d_{n+1}.
\]

The prime divisors involved in the degree drop are the divisorial components whose centers at time \(n\) are contained in
\[
\varphi^{-1}(D_{n+1})\cap I(\varphi).
\]
In this example, they correspond to the intermediate points of the eight short singularity patterns:
\[
(z+a,z-a)_n,
\quad (z-a,z+a)_n,
\quad (z+b,z-b)_n,
\quad (z-b,z+b)_n,
\]
\[
(c,-c)_n,
\quad (-c,c)_n,
\quad (d,-d)_n,
\quad (-d,d)_n.
\]
Each of these is the principal component of a special fiber one time step earlier, for example
\[
(z+a,z-a)_n=(x_{n-1}=z-a)^\circ,
\qquad
(z-a,z+a)_n=(x_{n-1}=z+a)^\circ,
\]
and similarly for the others. Hence, by symmetry and the definition of \(\mathbf t_n\), the sum of their bidegrees is
\[
8\mathbf t_{n-1}.
\]

Thus, corresponding to the degree-drop relation \eqref{eq:degree-drop-relation} of Section 5, in this example we obtain
\[
4\mathbf d_n+\mathbf d_{n-1}
=
\mathbf d_{n+1}
+
8\mathbf t_{n-1}.
\]

\section{A four-dimensional multiplicative map}

In this section we treat the four-dimensional multiplicative map studied by Stokes--Takenawa--Carstea\cite{StokesTakenawaCarstea2025}. In this paper, we compute the degree growth of this map as multidegree relations obtained from Halburd-type divisor decompositions.

Let \(\varphi:(x_1,x_2,x_3,x_4)\mapsto(\bar x_1,\bar x_2,\bar x_3,\bar x_4)\) be the birational map on \((\PP^1)^4\) defined by
\[
\bar x_1=\frac{1+bx_3}{x_1x_3x_4},
\qquad
\bar x_2=x_3,
\qquad
\bar x_3=\frac{1+bx_1}{x_1x_2x_3},
\qquad
\bar x_4=x_1.
\]
We write the orbit as
\[
(x_{1,n},x_{2,n},x_{3,n},x_{4,n})
=
\varphi^n(x_1,x_2,x_3,x_4).
\]

Let \(\operatorname{mdeg}\) denote the multidegree with respect to the
initial affine variables
\[
(x_{1,0},x_{2,0},x_{3,0},x_{4,0}),
\]
or equivalently the degree vector with respect to the standard basis of
\(N^1((\PP^1)^4)_{\mathbb Z}\).  We set
\[
\mathbf d_n:=\operatorname{mdeg} x_{1,n},
\qquad
\mathbf e_n:=\operatorname{mdeg} x_{3,n}.
\]
Since \(x_{2,n}=x_{3,n-1}\) and \(x_{4,n}=x_{1,n-1}\), we have
\[
\operatorname{mdeg} x_{2,n}=\mathbf e_{n-1},
\qquad
\operatorname{mdeg} x_{4,n}=\mathbf d_{n-1}.
\]

This map has confined singularity patterns starting from \(x_1=-b^{-1}\) and \(x_3=-b^{-1}\):
\begin{equation}
\label{eq:4d-conf-x1}
\begin{aligned}
(-b^{-1},\ast,\ast,\ast)
&\longrightarrow (\ast,\ast,0,-b^{-1})
\longrightarrow (\infty,0,\infty,\ast)
\longrightarrow (0,\infty,\ast,\infty)\\
&\longrightarrow (\ast,\ast,-b^{-1},0)
\longrightarrow (\ast,-b^{-1},\ast,\ast),
\end{aligned}
\end{equation}
and
\begin{equation}
\label{eq:4d-conf-x3}
\begin{aligned}
(\ast,\ast,-b^{-1},\ast)
&\longrightarrow (0,-b^{-1},\ast,\ast)
\longrightarrow (\infty,\ast,\infty,0)
\longrightarrow (\ast,\infty,0,\infty)\\
&\longrightarrow (-b^{-1},0,\ast,\ast)
\longrightarrow (\ast,\ast,\ast,-b^{-1}).
\end{aligned}
\end{equation}
Here \(\ast\) denotes a generic finite nonzero value unless otherwise specified.
The singularity pattern starting from \(x_1=0\) is the following cyclic pattern of period \(14\):
\begin{equation}
\label{eq:4d-cyclic-x1zero}
\begin{aligned}
(0,\ast,\ast,\ast)
&\to (\infty,\ast,\infty,0)
\to (\ast,\infty,0,\infty)
\to (\ast,0,\ast,\ast)\\
&\to (\ast,\ast,\infty,\ast)
\to (\ast,\infty,0,\ast)
\to (\infty,0,\ast,\ast)
\to (0,\ast,\infty,\infty)\\
&\to (\ast,\infty,\ast,0)
\to (\infty,\ast,0,\ast)
\to (\ast,0,\infty,\infty)
\to (0,\infty,\ast,\ast)\\
&\to (\infty,\ast,\ast,0)
\to (\ast,\ast,\ast,\infty)
\to (0,\ast,\ast,\ast).
\end{aligned}
\end{equation}
Similarly, the singularity pattern starting from \(x_3=0\) is
\begin{equation}
\label{eq:4d-cyclic-x3zero}
\begin{aligned}
(\ast,\ast,0,\ast)
&\to (\infty,0,\infty,\ast)
\to (0,\infty,\ast,\infty)
\to (\ast,\ast,\ast,0)\\
&\to (\infty,\ast,\ast,\ast)
\to (0,\ast,\ast,\infty)
\to (\ast,\ast,\infty,0)
\to (\infty,\infty,0,\ast)\\
&\to (\ast,0,\ast,\infty)
\to (0,\ast,\infty,\ast)
\to (\infty,\infty,\ast,0)
\to (\ast,\ast,0,\infty)\\
&\to (\ast,0,\infty,\ast)
\to (\ast,\infty,\ast,\ast)
\to (\ast,\ast,0,\ast).
\end{aligned}
\end{equation}

Define the multidegrees corresponding to the generic parts of the singular divisors by
\[
\begin{aligned}
\mathbf t_n
&:= \operatorname{mdeg}\bigl((x_{1,n}=-b^{-1})^\circ\bigr),\\
\mathbf u_n
&:= \operatorname{mdeg}\bigl((x_{3,n}=-b^{-1})^\circ\bigr),\\
\mathbf f_n
&:= \operatorname{mdeg}\bigl((x_{1,n}=0)^\circ\bigr),\\
\mathbf g_n
&:= \operatorname{mdeg}\bigl((x_{3,n}=0)^\circ\bigr).
\end{aligned}
\]
Then the singularity patterns \eqref{eq:4d-conf-x1} and \eqref{eq:4d-conf-x3}, together with Halburd-type divisor decompositions, give
\begin{equation}
\label{eq:4d-confined-degree}
\mathbf d_n=\mathbf t_n+\mathbf u_{n-4},
\qquad
\mathbf e_n=\mathbf u_n+\mathbf t_{n-4}.
\end{equation}

Similarly, the cyclic patterns \eqref{eq:4d-cyclic-x1zero} and \eqref{eq:4d-cyclic-x3zero} give
\[
\mathbf d_n
=
\mathbf t_{n-3}
+\mathbf u_{n-1}
+\mathbf f_n
+\mathbf f_{n-7}
+\mathbf f_{n-11}
+\mathbf g_{n-2}
+\mathbf g_{n-5}
+\mathbf g_{n-9},
\]
\[
\mathbf e_n
=
\mathbf u_{n-3}
+\mathbf t_{n-1}
+\mathbf g_n
+\mathbf g_{n-7}
+\mathbf g_{n-11}
+\mathbf f_{n-2}
+\mathbf f_{n-5}
+\mathbf f_{n-9}.
\]

Therefore
\begin{equation}
\label{eq:4d-cyclic-tu}
\begin{aligned}
\mathbf t_n
=&
\mathbf t_{n-3}
+\mathbf u_{n-1}
-\mathbf u_{n-4}
+\mathbf f_n
+\mathbf f_{n-7}
+\mathbf f_{n-11}
+\mathbf g_{n-2}
+\mathbf g_{n-5}
+\mathbf g_{n-9},\\
\mathbf u_n
=&
\mathbf u_{n-3}
+\mathbf t_{n-1}
-\mathbf t_{n-4}
+\mathbf g_n
+\mathbf g_{n-7}
+\mathbf g_{n-11}
+\mathbf f_{n-2}
+\mathbf f_{n-5}
+\mathbf f_{n-9}.
\end{aligned}
\end{equation}

Set
\[
\boldsymbol\epsilon_1=(1,0,0,0),\quad
\boldsymbol\epsilon_2=(0,1,0,0),\quad
\boldsymbol\epsilon_3=(0,0,1,0),\quad
\boldsymbol\epsilon_4=(0,0,0,1).
\]
From the cyclic pattern of period \(14\), the sequences \(\mathbf f_n\) and \(\mathbf g_n\) are periodic with period \(14\), and
\begin{equation}
\label{eq:4d-fg-period}
\mathbf f_n=
\begin{cases}
\boldsymbol\epsilon_1, & n\equiv 0 \pmod {14},\\
\boldsymbol\epsilon_4, & n\equiv 1 \pmod {14},\\
\boldsymbol\epsilon_3, & n\equiv 10 \pmod {14},\\
\boldsymbol\epsilon_2, & n\equiv 11 \pmod {14},\\
\mathbf 0, & \text{otherwise},
\end{cases}
\qquad
\mathbf g_n=
\begin{cases}
\boldsymbol\epsilon_3, & n\equiv 0 \pmod {14},\\
\boldsymbol\epsilon_2, & n\equiv 1 \pmod {14},\\
\boldsymbol\epsilon_1, & n\equiv 10 \pmod {14},\\
\boldsymbol\epsilon_4, & n\equiv 11 \pmod {14},\\
\mathbf 0, & \text{otherwise}.
\end{cases}
\end{equation}
Here the indices of \(\mathbf f_n\) and \(\mathbf g_n\) are interpreted modulo \(14\).

The initial values are obtained by direct computation:
\begin{equation}
\label{eq:4d-tu-de-initial}
\begin{aligned}
\mathbf t_0=\mathbf d_0&=(1,0,0,0),&
\mathbf t_1=\mathbf d_1&=(1,0,1,1),&
\mathbf t_2=\mathbf d_2&=(1,1,2,1),&
\mathbf t_3=\mathbf d_3&=(3,2,2,1),\\
\mathbf u_0=\mathbf e_0&=(0,0,1,0),&
\mathbf u_1=\mathbf e_1&=(1,1,1,0),&
\mathbf u_2=\mathbf e_2&=(2,1,1,1),&
\mathbf u_3=\mathbf e_3&=(2,1,3,2).
\end{aligned}
\end{equation}

Thus the initial values \eqref{eq:4d-tu-de-initial}, the periodic sequences \eqref{eq:4d-fg-period}, and the recurrence \eqref{eq:4d-cyclic-tu} determine \(\mathbf t_n\) and \(\mathbf u_n\) successively for \(n\geq4\). Finally, \eqref{eq:4d-confined-degree} gives \(\mathbf d_n\) and \(\mathbf e_n\) for \(n\geq4\). For reference, the first values obtained in this way are 
\[ \begin{array}{c|c|c} n & \mathbf d_n & \mathbf e_n\\ \hline 0 & (1,0,0,0) & (0,0,1,0)\\ 1 & (1,0,1,1) & (1,1,1,0)\\ 2 & (1,1,2,1) & (2,1,1,1)\\ 3 & (3,2,2,1) & (2,1,3,2)\\ 4 & (3,2,4,3) & (4,3,3,2)\\ 5 & (5,4,6,3) & (6,3,5,4)\\ 6 & (8,6,6,5) & (6,5,8,6)\\ 7 & (9,6,10,8) & (10,8,9,6)\\ 8 & (11,10,12,9) & (12,9,11,10)\\ 9 & (15,12,14,11) & (14,11,15,12)\\ 10 & (17,14,18,15) & (18,15,17,14). \end{array} 
\] 
If \(\delta_n:=\deg \varphi^n\) denotes the scalar degree, obtained by taking the maximum among the components of the multidegrees of \(x_{1,n},x_{2,n},x_{3,n},x_{4,n}\), then \[ \delta_0,\delta_1,\delta_2,\ldots = 1,1,2,3,4,6,8,10,12,15,18,21,25,28,33,\ldots . \] Thus the recurrence system above gives an explicit degree sequence for the four-dimensional map. In particular, the sequence exhibits quadratic degree growth, and hence has vanishing algebraic entropy \(\mathcal E=\lim_{n\to\infty}n^{-1}\log\delta_n\).

\section{Degree-drop divisors for a Riccati-type linearizable map}
\label{sec:riccati-valuation-degree-drop}

In this section we treat an example of a linearizable map due to Takenawa--Eguchi--Grammaticos--Ohta--Ramani--Satsuma \cite{TakenawaEguchiGrammaticosOhtaRamaniSatsuma2003}. That paper treats the non-autonomous case, whereas here we restrict ourselves to an autonomous map and analyze it from the viewpoint of divisor sequences on the orbit graph and degree-drop relations.

Let the space at time \(n\) be \(X_n=\PP^1\times\PP^1\), with affine coordinates \((x_n,y_n)\). The map considered in this paper is
\begin{equation}
\label{eq:riccati-map-autonomous}
\varphi:(x_n,y_n)\mapsto (x_{n+1}, y_{n+1}), \qquad
x_{n+1}=y_n,
\qquad
y_{n+1}
=
\frac{y_n(a x_n-y_n)}{x_n},
\end{equation}
where \(a\in \mathbb C^*\) is a constant.

\begin{figure}[htbp]
\centering
\includegraphics[width=.4\textwidth]{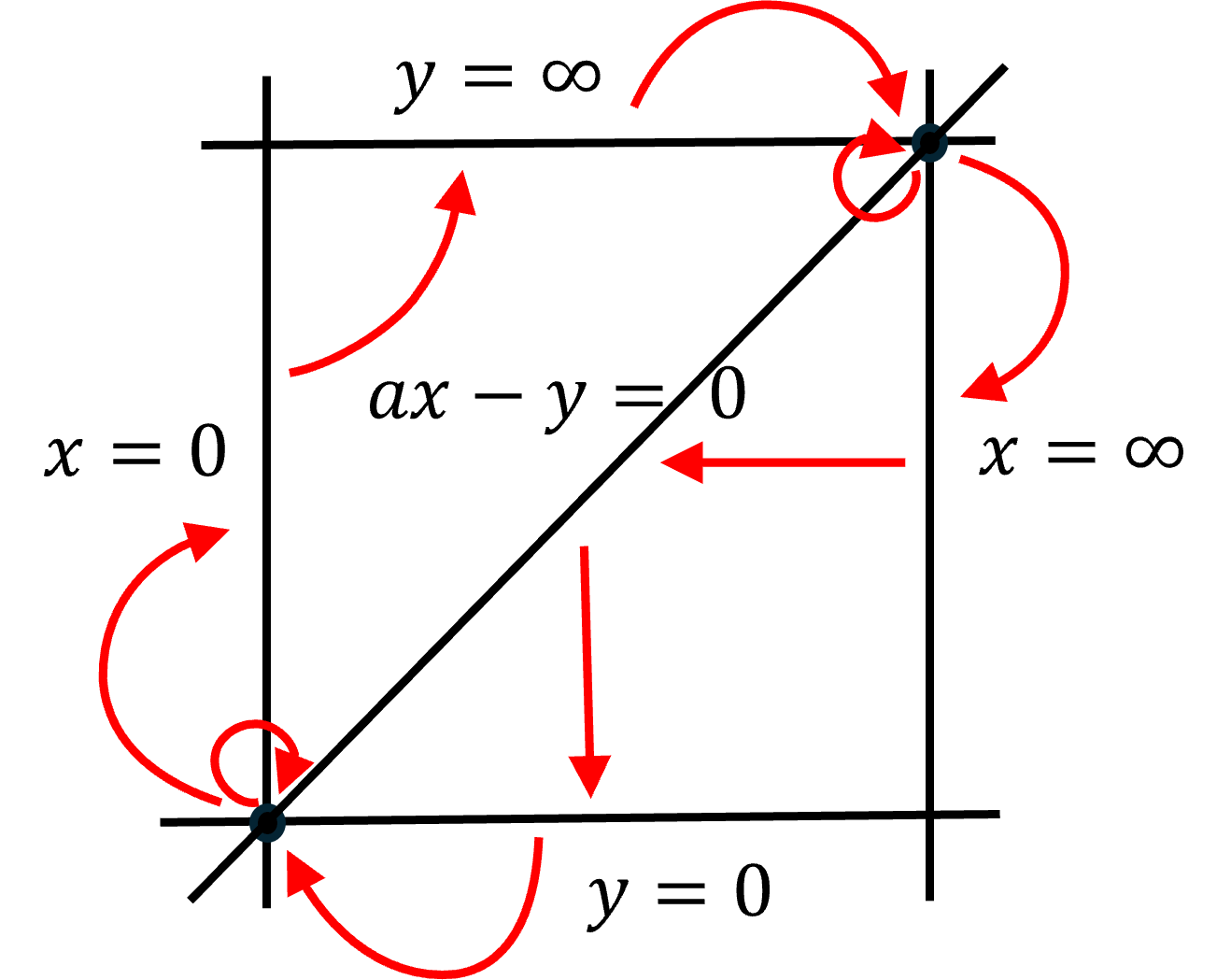}
\caption{A linearizable map.}
\label{fig:Ex8}
\end{figure}

The singularity patterns are
\begin{equation}
\label{eq:riccati-singular-sequence}
\cdots \to (\infty,\infty)
\to
(x=\infty)^\circ
\to
(a x -y)^\circ
\to
(y=0)^\circ
\to
(0,0)
\to
(0,0)
\to
\cdots
\end{equation}
and
\begin{equation}
\label{eq:riccati-singular-sequence2}
\cdots \to
(0,0)
\to
(0,0)
\to
(x=0)^\circ
\to
(y=\infty)^\circ
\to
(\infty, \infty)
\to
(\infty,\infty)
\to
\cdots
\end{equation}
(Figure~\ref{fig:Ex8}).

For example, at time \(j\), put
\[
(x_j,y_j)=(c,\varepsilon)
\]
along \(y=0\), where \(c\neq0\) is a generic constant and \(\varepsilon\) is a small parameter. Then, for \(r\geq0\),
\begin{equation}
\label{eq:riccati-orders-xy}
\ord_\varepsilon x_{j+r}
=
\left\lceil\frac r2\right\rceil,
\qquad
\ord_\varepsilon y_{j+r}
=
\left\lfloor\frac r2\right\rfloor+1.
\end{equation}
Similarly, putting \((x_j,y_j)=(c,\varepsilon^{-1})\), one obtains
\begin{equation}
\label{eq:riccati-orders-xy2}
\ord_\varepsilon x_{j+r}
=
-\left\lceil\frac r2\right\rceil,
\qquad
\ord_\varepsilon y_{j+r}
=
-\left\lfloor\frac r2\right\rfloor-1.
\end{equation}

Next we compute the degree-drop divisor for the divisor at time \(n+1\)
\[
D_{n+1}:=(y_{n+1}=0).
\]
By \eqref{eq:riccati-map-autonomous},
\[
\varphi^*D_{n+1}
=
(y_n=0)+(a x_n-y_n=0),
\]
and
\[
I_{D_{n+1}}(\varphi)
=
\{y_n(a x_n-y_n)=0,
\ x_n=0\}
=
(0,0)_n.
\]

By \eqref{eq:iterate-defect-divisor2},
\begin{equation}
\label{eq:riccati-degree-lower}
K_{n+1}^{(0)}(D_{n+1})=
\sum_{\substack{
C\\
(\varphi^n)_\circ (C)\subset (0,0)_n
}}
\mu_{n,C}(D_{n+1}) C=
 (\varphi^n)^* \varphi^*D_{n+1}
- (\varphi^{n+1})^* D_{n+1}.
\end{equation}
Here \(C\) is a prime divisor on \(X_0\), so the only such divisors are those appearing as prime divisors on \(X_0\) in the first singularity pattern \eqref{eq:riccati-singular-sequence}, namely
\[
(y_0=0)^\circ,
\qquad (y_1=0)^\circ,
\qquad (y_2=0)^\circ.
\]
Therefore
\begin{equation}
\label{eq:riccati-K-factorization}
K_{n+1}^{(0)}(D_{n+1})=
\sum_{j=0}^{2}
\mu_{n, (y_j=0)^\circ }((y_{n+1}=0)) (y_j=0)^\circ .
\end{equation}

Furthermore, by \eqref{eq:mu_nC}, \eqref{eq:riccati-map-autonomous}, and \eqref{eq:riccati-orders-xy},
\begin{equation}
\begin{aligned}
\mu_{n, (y_j=0)^\circ }((y_{n+1}=0))
=& \min \{ \ord_{(y_j=0)^\circ }(y_n) + \ord_{(y_j=0)^\circ }(a x_n -y_n),\
\ord_{(y_j=0)^\circ }(x_n)\}\\
=&\min \left\{
\left\lfloor\frac{n-j}{2}\right\rfloor+1 +\left\lceil\frac{n-j}{2}\right\rceil,\
\left\lceil\frac{n-j}{2}\right\rceil \right\}\\
=&\left\lceil\frac{n-j}{2}\right\rceil.
\end{aligned}
\end{equation}
Hence
\begin{equation}
K_{n+1}^{(0)}(D_{n+1})=
\left\lceil\frac{n}{2}\right\rceil  (y_0=0)^\circ
+\left\lceil\frac{n-1}{2}\right\rceil  (y_1=0)^\circ
+\left\lceil\frac{n-2}{2}\right\rceil  (y_2=0)^\circ
\qquad (n\geq1).
\end{equation}

We next compute the bidegrees with respect to the variables at time \(0\). On \(X_0\), the divisors
\[
(y_0=0)^\circ,
\qquad
(y_1=0)^\circ,
\qquad
(y_2=0)^\circ
\]
correspond to
\[
(y_0=0)^\circ,
\qquad
(a x_0-y_0=0)^\circ,
\qquad
(x_0=\infty)^\circ.
\]
Thus
\begin{equation}
\label{eq:riccati-degree-yj-zero}
\operatorname{bideg}(y_0=0)^\circ=(0,1),
\qquad
\operatorname{bideg}(y_1=0)^\circ=(1,1),
\qquad
\operatorname{bideg}(y_2=0)^\circ=(1,0).
\end{equation}
Therefore
\begin{equation}
\begin{aligned}
\bideg K_{n+1}^{(0)}(D_{n+1})=&
\left\lceil\frac{n}{2}\right\rceil (0,1)
+\left\lceil\frac{n-1}{2}\right\rceil (1,1)
+\left\lceil\frac{n-2}{2}\right\rceil (1,0)\\
=&
(n-1,n)
\qquad (n\geq1).
\end{aligned}
\end{equation}

Taking degrees in \eqref{eq:riccati-degree-lower}, and using \(\bideg x_n= \bideg y_{n-1}\) and \(\bideg (a x_n - y_n)=\bideg y_n\), we obtain
\[
(n-1,n)=2 \bideg y_n  - \bideg y_{n+1}.
\]
The initial values are
\[
\bideg y_0=(0,1),
\qquad
\bideg y_1=(1,2),
\]
and hence induction gives
\begin{equation}
\label{eq:riccati-k-general}
\bideg y_n=(n,n+1)
\qquad (n\geq0), 
\qquad
\bideg x_n=(n-1,n)
\qquad (n\geq 1).
\end{equation}

\begin{remark}
In fact, by blowing up \((0,0)\) and \((\infty,\infty)\), the degree-drop divisors in the iteration disappear and the map \(\varphi\) becomes algebraically stable. If the blown-up space is denoted by \(\widetilde X\), then
\[
(\varphi^n)^*=(\varphi^*)^n
\]
holds on \(\operatorname{Pic}(\widetilde X)\), and the computation reduces to a finite-dimensional one. However, incorporating the exceptional divisors themselves, added by the blow-ups of \((0,0)\) and \((\infty,\infty)\), as divisor sequences on the present orbit graph is not part of the framework of this paper.
\end{remark}

\section{Concluding remarks}

In this paper we treated two kinds of degree relations in the
divisor-orbit decomposition method.  The first kind is obtained from
Halburd-type divisor decompositions of time-indexed divisors on the
normalized orbit graph, as in \eqref{eq:degree-from-halburd-decomposition}.
The second kind is obtained from degree-drop divisors, or equivalently
from the positive-coefficient components of the functoriality defect of
pullbacks, as in \eqref{eq:degree-drop-relation}.  The latter components
are detected by the condition that, during the iteration, a prime divisor
is mapped into the indeterminacy locus associated with the pullback of the
chosen divisor.

These two kinds of relations need not involve the same divisorial
components.  A Halburd-type divisor decomposition depends on the chosen
time-indexed divisor and may contain extra components that do not directly
correspond to degree drops.  This distinction is visible in
Section~\ref{subsec:decomposition-vs-degree-drop}, where a degree-drop
relation is nontrivial even though the corresponding Halburd-type divisor
decomposition is trivial.  Thus, to apply the divisor-orbit decomposition
method, one has to identify the divisorial components appearing in both
kinds of relations.

If these components can be described, up to time shifts, by finitely many
divisor sequences, then Proposition~\ref{prop:finite-type-closed-linear-system}
gives a closed system of finitely many linear difference relations for the
degree sequence and the auxiliary degree sequences.  On the other hand,
the Riccati-type linearizable example in
Section~\ref{sec:riccati-valuation-degree-drop} shows that, if one stays
on the original space, the divisor sequences on the orbit graph need not
close into a finite set.  Even in such cases, degree relations can
sometimes be derived by explicitly tracking the singularity patterns.

As the example in Section~\ref{sec:riccati-valuation-degree-drop}
suggests, finite-dimensional relations may sometimes become visible after
passing to divisor classes or after choosing another birational model.  It
would be useful to clarify how such reductions are related to the divisor
computations on orbit graphs developed in this paper.

Another natural direction is to pass from finite-window orbit graphs to
the projective limit over all finite windows.  On such a full orbit
object, the time shift should act as a natural automorphism, rather than
as a correspondence between different finite windows.  This viewpoint may
be useful for studying divisor classes along the orbit, and possibly for
questions related to conserved quantities.

\section*{Statements and declarations}

\noindent\textbf{Funding.}
This work was supported by the Japan Society for the Promotion of Science
(JSPS) KAKENHI Grant Numbers JP22K03383 and JP26K01144.

\medskip
\noindent\textbf{Competing interests.}
The author has no relevant financial or non-financial interests to disclose.

\medskip
\noindent\textbf{Data availability.}
No datasets were generated or analysed during the current study.

\medskip
\noindent\textbf{Use of artificial intelligence.}
During the preparation of this manuscript, the author used ChatGPT
(OpenAI) to assist with language editing and with improving the wording
and clarity of the author's draft. The author reviewed and edited all
AI-assisted output and takes full responsibility for the content of the
manuscript.

\end{document}